\newtheorem{theorem}{Theorem}[section]
\newtheorem{corollary}[theorem]{Corollary}
\newtheorem{lemma}[theorem]{Lemma}
\newtheorem{algorithm}[theorem]{Algorithm}
\newtheoremstyle{normalnoit}{}{}{}{ }{\bf }{.}{ }{}
\theoremstyle{normalnoit}
\newtheorem{remark}[theorem]{Remark}
\newcommand{\be}{\begin{equation}}
\newcommand{\ee}{\end{equation}}
\newcommand{\bt}{\begin{theorem}}
\newcommand{\et}{\end{theorem}}
\begin{document}

%
%
\title{Signal Processing based on Stable radix-2 DCT Algorithms having Orthogonal Factors}
\author{
Sirani M. Perera\\
}

\maketitle

\keywords{
Orthogonal DCT Factorization, Recursive, Stable radix-2 DCT Algorithms, Error Bounds, Image Compression, Signal Flow Graphs}

\begin{abstract}
This paper presents stable, radix-2, completely recursive discrete cosine transformation algorithms DCT-I and DCT-III solely based on DCT-I, DCT-II, DCT-III, and DCT-IV having sparse and orthogonal factors. Error bounds for computing the completely recursive DCT-I, DCT-II, DCT-III, and DCT-IV algorithms having sparse and orthogonal factors are addressed. Image compression results are presented based on the recursive 2D DCT-II and DCT-IV algorithms for image size $512 \times 512$ pixels with transfer block sizes $8 \times 8$, $16 \times 16$, and $32 \times 32$ with $93.75\%$ absence of coefficients in each transfer block. Finally signal flow graphs are demonstrated based on the completely recursive DCT-I, DCT-II, DCT-III, and DCT-IV algorithms having orthogonal factors.  
\end{abstract}

%
\section{Introduction}
\label{intro}
The Fast Fourier Transform is used to efficiently compute the Discrete Fourier Transform (DFT) and its inverse. The DFTs are widely used in numerous applications in applied mathematics and electrical engineering \cite{VL92, S86, S99, BYR06, PT05, W84}, etc. 
\newline\newline
The DFT uses complex arithmetic. The DFT of a sequence of $n$-input $\{x_k\}_{k=0}^{n-1}$ is the sequence of $n$-output $\{y_k\}_{k=0}^{n-1}$ defined via 
\be
\left[\begin{array}{c}
y_0\\ 
y_1\\ 
\vdots\\ 
y_{n-1}
\end{array}\right]=\frac{1}{\sqrt{n}}\left[\begin{array}{llll}
1 & 1 &  \cdots & 1\\ 
1 & \omega_n  & \cdots & \omega_n^{n-1}\\ 
 \vdots&   \vdots &  &\vdots \\ 
 1&  \omega_n^{n-1} & \cdots & \omega_n^{(n-1)(n-1)}
\end{array}\right]
\left[\begin{array}{c}
x_0\\ 
x_1\\ 
\vdots\\ 
x_{n-1}
\end{array}\right]
\label{DFT-I-O}
\ee
where $\omega_n=e^{-\frac{2\pi i}{n}}$. There exist real analogues of the DFT, namely the Discrete Cosine Transforms and Discrete Sine Transforms, the main types are from I to IV. Similar to (\ref{DFT-I-O}), the I-IV variants of cosine and sine matrices transform the sequence of $n$-input into a sequence of $n$-output via the transform matrices stated in Table (\ref{tbl:sct}),  
\begin{table}[h]
\begin{center}
\begin{tabular}{ l  |  l }
\hline
 Cosine and Sine Transforms & Inverse Transforms \\
\hline
$C_{n+1}^I$ =  $\sqrt{\frac{2}{n}}\left [\epsilon_n(j)\:\epsilon_n(k)\:{\rm cos}\:\frac{jk\pi }{n}  \right ]$ &
$\left[C_{n+1}^{I}\right]^{-1} =C_{n+1}^{I}$\\
\hline
$C_{n}^{II}$ = $\sqrt{\frac{2}{n}}\left [ \epsilon_n(j)\:{\rm cos}\:\frac{j(2k+1)\pi }{2n}  \right ] $ &
$\left[C_{n}^{II}\right]^{-1}=C_{n}^{III}$\\
\hline
$C_{n}^{III}$ = $\sqrt{\frac{2}{n}}\left [ \epsilon_n(k)\:{\rm cos}\:\frac{(2j+1)k\pi }{2n}  \right ]$ &
$\left[C_{n}^{III}\right]^{-1}=C_{n}^{II}$  \\
\hline
$C_{n}^{IV}$ = $\sqrt{\frac{2}{n}}\left [ \:{\rm cos}\:\frac{(2j+1)(2k+1)\pi }{4n}  \right ]$ &
$\left[C_{n}^{IV}\right]^{-1}=C_{n}^{IV}$\\
\hline
$S_{n-1}^I$ =  $\sqrt{\frac{2}{n}}\left [\:{\rm sin}\:\frac{(j+1)(k+1)\pi }{n}  \right ]$ &
$\left[S_{n-1}^I\right]^{-1}=S_{n-1}^I$\\
\hline
$S_{n}^{II}$ = $\sqrt{\frac{2}{n}}\left [ \epsilon_n(j+1)\:{\rm sin}\:\frac{(j+1)(2k+1)\pi }{2n}  \right ]$ & 
$\left[S_{n}^{II}\right]^{-1}=S_{n}^{III}$\\
\hline
$S_{n}^{III}$ = $\sqrt{\frac{2}{n}}\left [ \epsilon_n(k+1)\:{\rm sin}\:\frac{(2j+1)(k+1)\pi }{2n}  \right ]$ & 
$\left[S_{n}^{III}\right]^{-1}=S_{n}^{II}$  \\
\hline
$S_{n}^{IV}$ = $\sqrt{\frac{2}{n}}\left [ \:{\rm sin}\:\frac{(2j+1)(2k+1)\pi }{4n}  \right ]$ &
$\left[S_{n}^{IV}\right]^{-1}=S_{n}^{IV}$\\
\hline
\end{tabular}
\end{center}
\caption{Cosine and Sine Transform Matrices}
\label{tbl:sct}
\end{table}
where for DCT-I $j, k = 0, 1, \cdots, n$, DST-I $j, k = 0, 1, \cdots, n-2$, DCT and DST II-IV $j, k = 0, 1, \cdots, n-1$,  $\epsilon_n(0)=\epsilon_n(n)=\frac{1}{\sqrt{2}}$, $\epsilon_n(j)=1$ for $j \in \{1,2,\cdots,n-1\}$ and $n \geq 2$ is an integer. Among DCT I-IV transformations, $C_{n+1}^I$ was introduced in \cite{WH83}, $C_{n}^{II}$ and its inverse $C_{n}^{III}$ were introduced in \cite{ANR74}, and $C_{n}^{IV}$ was introduced into digital signal processing in \cite{J79}. Moreover, among DST I-IV transformations, $S_{n-1}^I$ and $S_n^{IV}$ were introduced in \cite{J76, J79} and $S_{n}^{II}$ and its inverse $S_{n}^{III}$ were introduced in \cite{KS78}. These classifications were also stated in \cite{W84, PT05}.  
\\\\
It has been stated, in e.g. \cite{PM03, Sc99, S99}, that these cosine and sine matrices of types I-IV are orthogonal.  Strang, in \cite{S99}, proved that the column vectors of each cosine matrix are eigenvectors of a symmetric second difference matrix under different boundary conditions, and are hence orthogonal. Later Britanak, Yip, and Rao in \cite{BYR06} followed very closely the presentation made by Strang's \cite{S99} to point out that the column vectors of each cosine and sine matrix of types I-VIII are eigenvectors of a symmetric second difference matrix. Due to properties of these DCT and DST, it was shown by many authors (see e.g. \cite{BYR06, B13, CR12, FMW13, HSMR12, JKJ09, KS78, KSN14, KSS14, KR09, LKKP13, MPH12, S99, VZR12, VP09}) that these symmetric and asymmetric (rarely used) versions of DCT and DST can be widely used in image processing, signal processing, finger print enhancement, quick response code (QR code), etc.    
\newline\newline
To obtain real, fast DCT or DST algorithms one can mainly use a polynomial arithmetic technique or a matrix factorization technique. In the polynomial arithmetic technique (see e.g. \cite{ST91}), components of $C_n\:{\bf x}$ or $S_n\:{\bf x}$ are interpreted as the nodes of a degree $n$ polynomial, and then one applies the divide and conquer technique to reduce the degree of the polynomial. Later it was found (see e.g. \cite{TZ00}) that the polynomial arithmetic technique leads to inferior numerical stability of the DCT and DST algorithms. The matrix factorization technique is the direct factorization of the DCT or DST matrices into the product of sparse matrices (see e.g. \cite{W84, RY80, BYR06, PT05, SO13}). The matrix factorization for DST-I in \cite{RY80} used the results in \cite{CSF77} to decompose DST-I into DCT and DST. Also the decomposition for DCT-II in \cite{W84} is a slightly different version of the result in \cite{CSF77}. Though one can find orthogonal matrix factorizations for DCT and DST in \cite{W84}, the resulting algorithms in \cite{W84} are not completely recursive, and hence do not lead to simple recursive algorithms. Moreover \cite{BYR06} has used the same factorization for DST-II and DST-IV as in \cite{W84}. On the other hand, one can use these \cite{W84, BYR06, S99} results to derive recursive, stable algorithms as stated in \cite{PT05, SO13}. 
\newline\newline
However, \cite{PT05} has offered stable, recursive DCT-II and DCT-IV algorithms, based on DCT-II and DCT-IV. Thus this paper completes the picture and provides completely recursive, stable, radix-2 DCT-I and DCT-III algorithms that are solely defined via DCT I-IV, having sparse and orthogonal factors. The paper also addresses the error bounds on computing completely recursive algorithms for DCT I-IV. Moreover, this paper elaborates image compression (absence of $93.75\%$ coefficients in each transfer block) and signal transform designs based on the completely recursive algorithms based on DCT I-IV. 
\newline\newline
In section \ref{sec:factor} we derive factorizations for DCT-I and DCT-III having orthogonal and sparse matrices, and state completely recursive DCT I-IV algorithms solely defined via DCT I-IV having sparse, orthogonal, and rotation/rotation-reflection matrices. Next, in section \ref{sec:cost}, we present the arithmetic cost of computing these algorithms. In section \ref{Errbdd} we derive error bounds in computing these algorithms and discuss the stability. Finally in sections \ref{sec:IMC} and \ref{sec:SFG} respectively, we demonstrate image compression results and signal flow graphs based on these completely recursive DCT I-IV algorithms. 

\section{Completely recursive radix-2 DCT algorithms having orthogonal factors}
\label{sec:factor}
This section introduces sparse and orthogonal factorizations for DCT-I and DCT-III matrices. In the meantime, we present completely recursive, radix-2 DCT I-IV algorithms solely defined via DCT I-IV, having sparse, orthogonal, and butterfly matrices. One can observe a variant of the DCT-II and DCT-IV algorithms having almost orthogonal factors in \cite{PT05}.  
\newline\\
The following notations and sparse matrices are used frequently in this paper. Denote an involution matrix $\tilde{I}_n$ by $ \tilde{I}_n\:\textbf{x}= \left[x_{n-1},x_{n-2},\cdots,x_0 \right ]^T$,
a diagonal matrix $D_n$ by
$D_n  \: \textbf{x}={\rm diag} \left(  (-1)^k  \right)_{k=0}^{n-1}\textbf{x},$
an even-odd permutation matrix $P_n$ ($n \geq 3$) by
\be
P_n  \: \textbf{x} =\left \{ \begin{array}{c}
\left[x_0,x_2,\cdots,x_{n-2},x_1,x_3,\cdots,x_{n-1} \right ]^T\: \textrm{even\:}n, \\
\left[x_0,x_2,\cdots,x_{n-1},x_1,x_3,\cdots,x_{n-2} \right ]^T\: \textrm{odd\:}n,
\end{array} \right.
\nonumber
\ee
for any $\textbf{x}=\left[x_j\right ]_{j=0}^{n-1}$, and orthogonal matrices ($n \geq 4$) by 
\[
\breve{H}_{n+1}=\frac{1}{\sqrt{2}}\left[\begin{array}{rcr}
 I_{\frac{n}{2}}&  & \widetilde{I}_{\frac{n}{2}}\\ 
 & \sqrt{2} & 
\\
I_{\frac{n}{2}} &  &-\widetilde{I}_{\frac{n}{2}} \\ 
\end{array}\right], \:\:{H}_{n}=\frac{1}{\sqrt{2}}\left[\begin{array}{lr}
 I_{\frac{n}{2}}  & \widetilde{I}_{\frac{n}{2}}\\ 
\\
 I_{\frac{n}{2}}   &-\widetilde{I}_{\frac{n}{2}} \\ 
\end{array}\right], \]
\[U_n=\begin{bmatrix}
1 &  & \\ 
 & \frac{1}{\sqrt{2}}\begin{bmatrix}
I_{\frac{n}{2}-1} & I_{\frac{n}{2}-1}\\ 
 I_{\frac{n}{2}-1}& -I_{\frac{n}{2}-1}
\end{bmatrix} & \\ 
 &  & -1
\end{bmatrix}\begin{bmatrix}
I_{\frac{n}{2}} & \\ 
 & D_{\frac{n}{2}}\widetilde{I}_{\frac{n}{2}}
\end{bmatrix},\]
\[\begin{aligned}
{R}_n & =\begin{bmatrix}
I_{\frac{n}{2}} & \\ 
 & D_{\frac{n}{2}}
\end{bmatrix}\begin{bmatrix}
{\rm diag}\:C_{\frac{n}{2}} &\left ( {\rm diag}\:S_{\frac{n}{2}} \right ) \widetilde{I}_{\frac{n}{2}}\\ 
-\widetilde{I}_{\frac{n}{2}}\left ( {\rm diag}\:S_{\frac{n}{2}} \right ) &  {\rm diag}\:\left ( \widetilde{I}_{\frac{n}{2}}C_{\frac{n}{2}}  \right )\end{bmatrix}
\end{aligned} \]
where for $k=0, 1, \cdots, \frac{n}{2}-1$
\[\small
C_{\frac{n}{2}}=\left [ {\rm cos} \frac{(2k+1)\pi}{4n} \right ] \hspace{.1in}{\rm and}\hspace{.1in}
S_{\frac{n}{2}}=\left [ {\rm sin} \frac{(2k+1)\pi}{4n} \right ]. 
\]
DCT-II and DCT-IV algorithms are the keys for the completely recursive procedure, so for a given vector ${\bf x} \in \mathbb{R}^n$, we present algorithms in order ${\bf y}=C_n^{II}\:{\bf x}$, ${\bf y}=C_n^{IV}\:{\bf x}$, ${\bf y}=C_n^{III}\:{\bf x}$ and ${\bf y}=C_{n+1}^{I}\:{\bf x}$. Following the matrix factorizations for DCT-II and DCT-IV in \cite{PT05}, let us first state recursive DCT-II and DCT-IV having orthogonal factors via algorithms $\bf{(\ref{algo:c2})}$ and $\bf{(\ref{algo:c4})}$, respectively.  
\begin{algorithm} $\left(\bf{cos2(x, n)} \right)$\\
\label{algo:c2}
Input: $n=2^t(t \geq 1)$, $n_1=\frac{n}{2}$, ${\bf x} \in \mathbb{R}^n$.
\begin{enumerate}
\item{If $n=2$, then} \\
$ {\bf y}:=\frac{1}{\sqrt{2}}\left[\begin{array}{rr}
1 & 1\\
1 & -1
\end{array}\right] {\bf x}.
$
\item{ If $n \geq 4$, then}\\
$
\begin{array}{c}
\begin{aligned}
\hspace{.1in}[u_j ]_{j=0}^{n-1}  :=&\: H_n\:{\bf x},   \\
\hspace{.1in} {\bf z1}  :=& \:{\bf cos2} \left(\left[u_j \right]_{j=0}^{n_1-1}, n_1 \right),   \\
\hspace{.1in} {\bf z2}  :=& \:{\bf cos4} \left( \left[u_j \right]_{j=n_1}^{n-1}, n_1 \right),  \\
\hspace{.1in} {\bf y} :=& \: P_n^T\left({\bf z1}^T, {\bf z2}^T \right)^T.   
\end{aligned}
\end{array}
$
\end{enumerate}
Output: ${\bf y}=C_n^{II}{\bf x}$.
\end{algorithm}
\begin{algorithm} $\left(\bf{cos4(x, n)} \right)$\\
\label{algo:c4}
Input: $n=2^t(t \geq 1)$, $n_1=\frac{n}{2}$, ${\bf x} \in \mathbb{R}^n$.
\begin{enumerate}
\item{If $n=2$, then} \\
$
{\bf y}:=\left[\begin{array}{rr}
\cos \frac{\pi}{8} & \sin \frac{\pi}{8}\\
\sin \frac{\pi}{8} & -\cos \frac{\pi}{8}
\end{array}\right]{\bf x}.
$
\item{ If $n \geq 4$, then}\\
$
\begin{array}{c}
\begin{aligned}
\hspace{.1in} [u_j ]_{j=0}^{n-1}  :=&\:  R_n\:{\bf x},   \\
\hspace{.1in} {\bf z1} :=&\: {\bf cos2} \left(\left[u_j \right]_{j=0}^{n_1-1}, n_1 \right),   \\
\hspace{.1in} {\bf z2} := &\:{\bf cos2} \left( \left[u_j \right]_{j=n_1}^{n-1}, n_1 \right), \\ 
\hspace{.1in} {\bf w} := & \: U_n  \left({\bf z1}^T, {\bf z2}^T \right)^T, \\ 
\hspace{.1in} {\bf y} :=  & \: P_n^T {\bf w}.   
\end{aligned}
\end{array}
$
\end{enumerate}
Output: ${\bf y}=C_n^{IV}{\bf x}$.
\end{algorithm}
By using the well known transpose property between DCT-II and DCT-III we can state an algorithm for DCT-III via $\bf{(\ref {algo:c3})}$. This algorithm executes recursively with the DCT-II and DCT-IV algorithms. 
\begin{algorithm} $\left(\bf{cos3(x, n)} \right)$\\
\label{algo:c3}
Input: $n=2^t(t \geq 1)$, $n_1=\frac{n}{2}$, ${\bf x} \in \mathbb{R}^n$.
\begin{enumerate}
\item{If $n=2$, then} \\
$
{\bf y}:=\frac{1}{\sqrt{2}}\left[\begin{array}{rr}
1 & 1\\
1 & -1
\end{array}\right]{\bf x}.
$
\item{ If $n \geq 4$, then}\\
$
\begin{array}{c}
\begin{aligned}
\hspace{.1in} [u_j ]_{j=0}^{n-1}  :=&\: P_n\:{\bf x},   \\
\hspace{.1in} {\bf z1}  :=&\: {\bf cos3} \left(\left[u_j \right]_{j=0}^{n_1-1}, n_1 \right),   \\
\hspace{.1in} {\bf z2}  :=&\: {\bf cos4} \left( \left[u_j \right]_{j=n_1}^{n-1}, n_1 \right),  \\
\hspace{.1in} {\bf y} :=&\:  H_n^T \left({\bf z1}^T, {\bf z2}^T \right)^T. \\
\end{aligned}
\end{array}
$
\end{enumerate}
Output: ${\bf y}=C_n^{III}{\bf x}$.
\end{algorithm}
Before stating the algorithm for DCT-I let us derive a sparse and orthogonal factorization for DCT-I. 
\begin{lemma} 
\label{LC1}
Let $n \geq 4$ be an even integer. The matrix $C^I_{n+1}$ can be factored in the form
\begin{equation}
C_{n+1}^I={P}_{n+1}^T\:\left[\begin{array}{c|c}
C_{\frac{n}{2}+1}^{I} & 0\\
\hline\\
0 & C_{\frac{n}{2}}^{III}
\end{array}\right]\breve{H}_{n+1}.
\label{nc1}
\end{equation}
\end{lemma}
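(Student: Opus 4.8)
The plan is to verify the identity (\ref{nc1}) by applying both sides to an arbitrary input $\mathbf{x}=[x_k]_{k=0}^{n}$ and comparing entries, exploiting the parity symmetry of the DCT-I kernel under the reflection $k\mapsto n-k$. First I would record the action of the rightmost factor: writing $\mathbf{u}=\breve{H}_{n+1}\mathbf{x}$, one reads off $u_i=\frac{1}{\sqrt{2}}(x_i+x_{n-i})$ for $i=0,\ldots,\frac{n}{2}-1$, the fixed center entry $u_{n/2}=x_{n/2}$, and $u_{\frac{n}{2}+1+m}=\frac{1}{\sqrt{2}}(x_m-x_{n-m})$ for $m=0,\ldots,\frac{n}{2}-1$. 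Thus the top $\frac{n}{2}+1$ coordinates of $\mathbf{u}$ carry the symmetric sums (together with the center), while the bottom $\frac{n}{2}$ coordinates carry the antisymmetric differences.

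The engine of the proof is the elementary identity $\cos\frac{j(n-k)\pi}{n}=(-1)^j\cos\frac{jk\pi}{n}$, together with $\epsilon_n(k)=\epsilon_n(n-k)$, so that each even-indexed row $j=2l$ of $C_{n+1}^I$ is invariant under $k\mapsto n-k$ while each odd-indexed row $j=2l+1$ changes sign. I would also note that on the odd rows the center column vanishes, since $\cos\frac{(2l+1)\pi}{2}=0$. Consequently the even output entries depend on $\mathbf{x}$ only through the sums $u_i$ and the odd output entries only through the differences $u_{\frac{n}{2}+1+m}$, which is exactly the even/odd block split produced by the middle diagonal factor in (\ref{nc1}).

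The two remaining steps are direct substitutions. For the even rows I would expand $y_{2l}=\sqrt{\frac{2}{n}}\,\epsilon_n(2l)\sum_{k=0}^{n}\epsilon_n(k)\cos\frac{2lk\pi}{n}x_k$, fold the sum over the reflection to collapse it onto the sums $u_i$, and check term-by-term that it equals $\bigl(C_{\frac{n}{2}+1}^{I}[u_0,\ldots,u_{n/2}]^T\bigr)_l$, using $\epsilon_n(2l)=\epsilon_{\frac{n}{2}}(l)$. Likewise, for the odd rows the same folding gives $y_{2l+1}=\bigl(C_{\frac{n}{2}}^{III}[u_{n/2+1},\ldots,u_n]^T\bigr)_l$, the center term dropping out automatically. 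Since $P_{n+1}^T$ sends the first $\frac{n}{2}+1$ intermediate entries to the even positions $0,2,\ldots,n$ and the last $\frac{n}{2}$ to the odd positions $1,3,\ldots,n-1$, matching these two families of entries establishes (\ref{nc1}).

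I expect the only real friction to be the constant-chasing at the boundary indices $k\in\{0,\frac{n}{2},n\}$: the weights $\epsilon_n(0)=\epsilon_n(n)=\frac{1}{\sqrt{2}}$ on the DCT-I side, the $\frac{1}{\sqrt{2}}$ introduced by $\breve{H}_{n+1}$, and the weights $\epsilon_{\frac{n}{2}}(0)=\epsilon_{\frac{n}{2}}(\frac{n}{2})=\frac{1}{\sqrt{2}}$ inside $C_{\frac{n}{2}+1}^{I}$ and $C_{\frac{n}{2}}^{III}$ must all be reconciled together with the change of global normalization from $\sqrt{2/n}$ to $\sqrt{4/n}=\sqrt{2}\,\sqrt{2/n}$. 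Once the extra $\sqrt{2}$ is seen to cancel against the $\frac{1}{\sqrt{2}}$ from the butterfly, the two substitutions are routine and the factorization follows.
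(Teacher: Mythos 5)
Your proposal is correct and rests on essentially the same argument as the paper: both separate the even- and odd-indexed rows of $C_{n+1}^I$ via $P_{n+1}$ and use the reflection identity $\cos\frac{j(n-k)\pi}{n}=(-1)^j\cos\frac{jk\pi}{n}$ (with $\epsilon_n(k)=\epsilon_n(n-k)$, and the vanishing of the center column on odd rows) to recognize a half-size DCT-I acting on the symmetrized data and a DCT-III acting on the antisymmetrized data. The only difference is presentational: the paper works at the level of matrix blocks, writing $P_{n+1}C_{n+1}^I$ as a $2\times 2$ block matrix whose blocks are $C_{\frac{n}{2}+1}^{I}$ and $C_{\frac{n}{2}}^{III}$ times sparse factors and then pulling out $\breve{H}_{n+1}$, whereas you verify the same identity entrywise by applying both sides to an arbitrary vector and folding the sums.
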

\begin{proof}
Let's apply ${P}_{n+1}$ to $C_{n+1}^{I}$ to permute rows and then partition the resultant matrix. So
\\
(1,1) block becomes $\sqrt{\frac{2}{n}}\left [ \epsilon_n(2j)\epsilon_n(k){\rm cos}\:\frac{2jk\pi}{n}  \right ]_{j,k=0}^{\frac{n}{2}}$,
\\
(1,2) block becomes
\be
\begin{aligned}
&\sqrt{\frac{2}{n}}\left [ \epsilon_n(2j)\epsilon_n\left(\frac{n}{2}+k+1\right){\rm cos}\:\frac{j(n+2k+2)\pi}{n}  \right ]_{j,k=0}^{\frac{n}{2}, \frac{n}{2}-1}\\
&=\sqrt{\frac{2}{n}}\left [ \epsilon_n(2j)\epsilon_n\left(\frac{n}{2}+k+1\right){\rm cos}\:\frac{j(n-2k-2)\pi}{n}  \right ]_{j,k=0}^{\frac{n}{2}, \frac{n}{2}-1}, 
\end{aligned}
\nonumber
\ee
\\
(2,1) block becomes $\sqrt{\frac{2}{n}}\left [ \epsilon_n(k){\rm cos}\:\frac{(2j+1)k\pi}{n}  \right ]_{j,k=0}^{{\frac{n}{2}-1},{\frac{n}{2}}}$, \\
(2,2) block becomes
\be
\begin{aligned}
&\sqrt{\frac{2}{n}}\left [\epsilon_n\left(\frac{n}{2}+k+1\right){\rm cos}\:\frac{(2j+1)(n+2k+2)\pi}{2n}  \right ]_{j,k=0}^{\frac{n}{2}-1}\\
&= \sqrt{\frac{2}{n}}\left [-\epsilon_n\left(\frac{n}{2}+k+1\right){\rm cos}\:\frac{(2j+1)(n-2k-2)\pi}{2n}  \right ]_{j,k=0}^{\frac{n}{2}-1}.
\end{aligned}
\nonumber
\ee
Hence
\small
\be
\begin{aligned}
{P}_{n+1} C_{n+1}^{I} & = \frac{1}{\sqrt{2}}\left[\begin{array}{c|c}
C_{\frac{n}{2}+1}^{I}\:\begin{bmatrix}
{I}_{\frac{n}{2}} & 0\\
0 & \sqrt{2}
\end{bmatrix} & C_{\frac{n}{2}+1}^{I}\:\begin{bmatrix}
\tilde{I}_{\frac{n}{2}}\\
0 
\end{bmatrix}\\
\hline\\
C_{\frac{n}{2}}^{III}\:\begin{bmatrix}
{I}_{\frac{n}{2}} & 0 
\end{bmatrix} & -C_{\frac{n}{2}}^{III}\:\tilde{I}_{\frac{n}{2}}
\end{array}\right]\\
& =\left[\begin{array}{c|c}
C_{\frac{n}{2}+1}^{I} & 0\\
\hline\\ 
0 & C_{\frac{n}{2}}^{III}
\end{array}\right]\breve{H}_{n+1} 
\end{aligned}
\nonumber
\ee
\end{proof}
\normalsize
Thus an algorithm for DCT-I can be stated via $\bf{(\ref {algo:c1})}$, which executes recursively with DCT II-IV algorithms.  
\begin{algorithm} $\left(\bf{cos1(x, n+1)} \right)$\\
\label{algo:c1}
Input: $n=2^t(t \geq 1)$, $n_1=\frac{n}{2}$, ${\bf x} \in \mathbb{R}^{n+1}$.
\begin{enumerate}
\item{If $n=2$, then} \\
$
{\bf y}:=\frac{1}{2}\left[\begin{array}{rrr}
1 & 1 & 0\\
0 & 0 & \sqrt{2}\\
1 & -1 & 0\\
\end{array}\right]\left[\begin{array}{rrr}
1 & 0 & 1\\
0 & \sqrt{2}& 0\\
1 & 0 & -1\\
\end{array}\right]{\bf x}.
$
\item{ If $n \geq 4$, then}\\
$
\begin{array}{c}
\begin{aligned}
\hspace{.1in}[u_j ]_{j=0}^{n}  :=&\: \breve{H}_{n+1}\:{\bf x},   \\
\hspace{.1in}{\bf z1}  :=&\: {\bf cos1} \left(\left[u_j \right]_{j=0}^{n_1}, n_1+1 \right),   \\
\hspace{.1in} {\bf z2}  :=&\: {\bf cos3} \left( \left[u_j \right]_{j=n_1+1}^{n}, n_1 \right),  \\
\hspace{.1in} {\bf y} :=&\:  P_{n+1}^T \left({\bf z1}^T, {\bf z2}^T \right)^T. \\
\end{aligned}
\end{array}
$
\end{enumerate}
Output: ${\bf y}=C_{n+1}^{I}{\bf x}$.
\end{algorithm}

\section{Arithmetic cost of computing DCT algorithms}
\label{sec:cost}
We first calculate the arithmetic cost of computing DCT I-IV algorithms. Let's denote the number of additions and multiplications required to compute - say a length $n$ DCT II algorithm: ${\bf y}=C_n^{II}\:{\bf x}$ by $\#a(\textrm{DCT-II}, n)$ and $\#m(\textrm{DCT-II}, n)$. Note that the multiplication of $\pm 1$ and permutations are not counted. Once the cost is computed we show numerical results for the speed improvement factor of these algorithms. 
\subsection{Number of additions and multiplications in computing DCT I-IV algorithms}
\label{subsec:amDCT}
Here we calculate the arithmetic cost of computing the DCT I-IV algorithms in order $\bf{(\ref{algo:c2})}$, $\bf{(\ref{algo:c4})}$, $\bf{(\ref{algo:c3})}$ and $\bf{(\ref{algo:c1})}$. The cost of addition in computing DCT-II and DCT-IV algorithms is the same as in \cite{PT05}, but the cost of multiplication is different from \cite{PT05}. The latter is because in this paper, not only DCT-I and DCT-III algorithms but also DCT-II and DCT-IV algorithms have orthogonal factors not almost orthogonal factors. Let us first derive explicitly the number of multiplications required to compute DCT-II and DCT-IV algorithms and then the arithmetic cost of DCT-III and DCT-I algorithms respectively.
\begin{lemma}\label{Lc41}
Let $n=2^t\:(t \geq 2)$ be given. Using algorithms $\bf{(\ref{algo:c2})}$ and $\bf{(\ref{algo:c4})}$, the arithmetic cost of computing length $n$ DCT-II algorithm is given by  
\begin{eqnarray}
\#a(\textrm{DCT-II}, n) =& \frac{4}{3}nt-\frac{8}{9}n-\frac{1}{9}(-1)^t+1,
\nonumber \\
\#m(\textrm{DCT-II}, n) =& \frac{5}{3}nt-\frac{10}{9}n+\frac{1}{9}(-1)^t+1,
\label{cc2c4}
\end{eqnarray}
\end{lemma}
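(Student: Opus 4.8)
## Proof Proposal

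The plan is to set up coupled recurrences for the arithmetic cost of the mutually recursive algorithms $\mathbf{(\ref{algo:c2})}$ and $\mathbf{(\ref{algo:c4})}$, solve them, and then read off the DCT-II cost. Because $\mathbf{cos2}$ calls one smaller $\mathbf{cos2}$ and one $\mathbf{cos4}$, while $\mathbf{cos4}$ calls two copies of $\mathbf{cos2}$, the additions and multiplications of the two algorithms are intertwined, so I would track four sequences simultaneously: $\#a(\textrm{II},n)$, $\#m(\textrm{II},n)$, $\#a(\textrm{IV},n)$, $\#m(\textrm{IV},n)$.

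First I would count the cost contributed by the sparse orthogonal factors at a single recursion level, taking care to honor the stated convention that multiplications by $\pm1$ and permutations are free. The factor $H_n$ in $\mathbf{cos2}$ contributes only additions (it is a butterfly scaled by $1/\sqrt{2}$; I would decide how the normalization scalars are charged, matching the bookkeeping implicit in the target formula). The factor $R_n$ in $\mathbf{cos4}$ is the expensive one: it is built from the diagonal blocks $\mathrm{diag}\,C_{n/2}$ and $\mathrm{diag}\,S_{n/2}$ together with $\widetilde I_{n/2}$ and $D_{n/2}$, so I would count its nonzero off-$\pm1$ entries to get a linear-in-$n$ count of multiplications and additions (each $2\times 2$ rotation-type coupling costs the usual additions and multiplications, with the reflections and sign diagonals free). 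The factor $U_n$ likewise contributes additions from its inner butterfly block while its $\pm1$ entries and the $D_{n/2}\widetilde I_{n/2}$ block are free. This gives per-level constants, say $\alpha n$, $\beta n$ and so on, for each of additions and multiplications.

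Next I would assemble the recurrences. At size $n$ (for $n\ge 4$) I expect
\begin{eqnarray}
\#a(\textrm{II},n) &=& \#a(\textrm{II},\tfrac{n}{2})+\#a(\textrm{IV},\tfrac{n}{2})+a_{\mathrm{II}}\,n,
\nonumber\\
\#a(\textrm{IV},n) &=& 2\,\#a(\textrm{II},\tfrac{n}{2})+a_{\mathrm{IV}}\,n,
\nonumber
\end{eqnarray}
with the identical shape for $\#m$ and the base cases $n=2$ read off from the explicit $2\times 2$ matrices (the DCT-II base is a pure butterfly, so no multiplications; the DCT-IV base uses $\cos\frac{\pi}{8},\sin\frac{\pi}{8}$, so it costs multiplications). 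I would then eliminate $\#a(\textrm{IV},\cdot)$ by substitution to obtain a single second-order recurrence in $\#a(\textrm{II},\cdot)$ along the scale $n=2^t$; writing $f(t)=\#a(\textrm{II},2^t)$ this becomes a linear recurrence of the form $f(t)=f(t-1)+2f(t-2)+(\text{linear in }n)$. Its characteristic roots are $2$ and $-1$, which already explains the shape of the claimed closed form: the $\frac{4}{3}nt$ term is the resonant particular solution (root $2=$ the growth rate of $n$), the $n$ term is the homogeneous $2^t$ mode, and the $(-1)^t$ term is the other homogeneous mode. I would pin down the three undetermined constants using the base values at $t=1,2$ and verify they reproduce the coefficients $\frac{4}{3},-\frac{8}{9},-\frac{1}{9},1$ and $\frac{5}{3},-\frac{10}{9},\frac{1}{9},1$.

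The main obstacle is purely bookkeeping precision rather than conceptual difficulty: getting the per-level constants $a_{\mathrm{II}},a_{\mathrm{IV}},m_{\mathrm{II}},m_{\mathrm{IV}}$ exactly right, since the normalization factors $1/\sqrt{2}$ and the interaction of $\mathrm{diag}\,C$, $\mathrm{diag}\,S$ with $\widetilde I$ in $R_n$ and $U_n$ can easily be miscounted by a constant, and the $\pm1$/permutation exemption must be applied consistently. (This is exactly where the multiplication count departs from \cite{PT05}, since here the factors are genuinely orthogonal rather than almost orthogonal.) Once the constants are fixed, solving the recurrence is routine: I would present the characteristic-root solution, match the two base cases, and confirm the stated formula by an induction check at a couple of small $t$.
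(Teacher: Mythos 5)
You take essentially the same route as the paper: read off the coupled recurrences from the two mutually recursive algorithms, eliminate the DCT-IV sequence to obtain a second-order linear difference equation in $t$ (for $n=2^t$) with characteristic roots $2$ and $-1$, add the resonant particular solution proportional to $t\cdot 2^t$, and fix the remaining constants from the small cases. However, your tentative bookkeeping is wrong on exactly the point you flagged, and it is not a free choice: the paper's convention exempts only multiplications by $\pm 1$ and permutations, so the $\frac{1}{\sqrt{2}}$ normalizations \emph{are} counted. Concretely, $\#m(H_n)=n$ (not zero), $\#m(U_n)=n-2$, $\#m(R_n)=2n$, and the base cases are $\#m(\textrm{DCT-II},2)=2$ and $\#m(\textrm{DCT-IV},2)=4$; in particular the $n=2$ DCT-II butterfly is not multiplication-free. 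These values yield the paper's recurrence
\[
\#m(\textrm{DCT-II},n)=\#m\!\left(\textrm{DCT-II},\tfrac{n}{2}\right)+2\,\#m\!\left(\textrm{DCT-II},\tfrac{n}{4}\right)+\tfrac{5}{2}n-2,
\]
with initial values $2$ and $10$, whose solution is the stated $\frac{5}{3}nt-\frac{10}{9}n+\frac{1}{9}(-1)^t+1$. If instead you charge the $\frac{1}{\sqrt{2}}$ factors as free, the inhomogeneous term drops to $n$ and the leading coefficient comes out $\frac{2}{3}$ rather than $\frac{5}{3}$, so the claimed formula cannot be recovered; your own final verification step would detect the mismatch, but the convention must be settled before your per-level constants (and hence the whole count) mean anything. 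Note also that the inhomogeneous term carries the constant $-2$ coming from $\#m(U_n)=n-2$, which your purely proportional ansatz $a_{\mathrm{IV}}\,n$ drops; that constant is precisely what produces the trailing $+1$ in both closed forms.
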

\begin{proof}
Following algorithms $\bf{(\ref{algo:c2})}$ and $\bf{(\ref{algo:c4})}$
\begin{eqnarray}
\#m(\textrm{DCT-II}, n)  =& \#m\left(\textrm{DCT-II}, \frac{n}{2} \right) + \#m\left(\textrm{DCT-IV}, \frac{n}{2} \right) 
\nonumber \\ 
& \hspace{.01in} +\: \#m \left({H}_n \right),
\nonumber \\
\#m\:(\textrm{DCT-IV}, n) =& \#m\left(U_n \right) + 2 \cdot \#m\left(\textrm{DCT-II}, \frac{n}{2} \right) 
\nonumber \\ 
& \hspace{.01in} +\: \#m\left(R_n \right).
\nonumber\\
\label{ac24}
\end{eqnarray}
By referring to the structures of ${H}_n$, $U_n$, and $R_n$ 
\begin{eqnarray}
\#a \left({H}_n \right) = n, \:\: \#m \left({H}_n \right) = n,
\nonumber\\
\#a \left({U}_n \right) = n-2, \:\: \#m \left({U}_n \right) = n-2,
\nonumber\\
\#a \left({R}_n \right) = n, \:\: \#m \left({R}_n \right) = 2n,
\nonumber\\
\label{ruc24}
\end{eqnarray}
Thus
\begin{eqnarray}
\#m(\textrm{DCT-II}, n)  = & \#m\left(\textrm{DCT-II}, \frac{n}{2} \right) + 2 \cdot \#m\left(\textrm{DCT-II}, \frac{n}{4} \right) 
\nonumber \\ 
& \hspace{.01in} +\: \frac{5}{2}n -2.
\nonumber
\end{eqnarray}
Since $n=2^t$ we can obtain the linear difference equation of order 2 with respect to $t$
\begin{eqnarray}
\#m(\textrm{DCT-II}, 2^t)  &- &  \#m\left(\textrm{DCT-II}, 2^{t-1} \right) - 2\cdot \#m \left(\textrm{DCT-II}, 2^{t-2} \right) 
\nonumber \\ 
& = & 
 5 \cdot 2^{t-1} -2.
\nonumber
\end{eqnarray}
If $\#m(\textrm{DCT-II}, 2^t)=\alpha^t$(where $\alpha \neq 0$) is a solution then the above follows
\be
{\alpha}^t  - {\alpha}^{t-1} - 2\:({\alpha}^{t-2}) = 5 \cdot 2^{t-1} -2.
\label{eaca}
\ee
The homogeneous solution of the above is given by solving the characteristic equation
\[
{\alpha}^{t-2}({\alpha}^2  - {\alpha} - 2) = 0.
\]
From which we get 
\[
\#m(\textrm{DCT-II}, 2^t)=r_12^t + r_2(-1)^t + {\rm particular\: solution}
\]
where $r_1$ and $r_2$ are constants. Let $\alpha^t=r_3+r_4t\cdot2^t$ (where $r_3$ and $r_4$ are constants) be the particular solution. Substituting this potential equation into (\ref{eaca}) and equating the coefficients we can find that
\[
\#m(\textrm{DCT-II}, 2^t)=r_12^t + r_2(-1)^t + \frac{5}{3}\cdot t\cdot2^t + 1 
\]
Using the initial conditions $\#m \left(\textrm{DCT-II}, 2 \right) = 2$ and $\#m \left(\textrm{DCT-II}, 4 \right) = 10$, we can determine the general solution
\be
\#m(\textrm{DCT-II}, 2^t)=\frac{5}{3}\cdot t\cdot2^t - \frac{10}{9} 2^t + \frac{1}{9} (-1)^t+1  
\label{acansc2}
\ee
Thus substituting $n=2^t$ we can obtain the number of multiplications required to compute DCT-II algorithm as stated in (\ref{cc2c4}). 
\newline
Again by algorithms $\bf{(\ref{algo:c2})}$ and $\bf{(\ref{algo:c4})}$ together with (\ref{ruc24}), we can state
\[
\#a(\textrm{DCT-II}, n)  = \#a\left(\textrm{DCT-II}, \frac{n}{2} \right) + 2 \cdot \#a\left(\textrm{DCT-II}, \frac{n}{4} \right) + 2n -2.
\]
Since $n=2^t$, the second order linear difference equation with respect to $t$ can be given via
\begin{eqnarray}
\#a(\textrm{DCT-II}, 2^t)  &-& \#a\left(\textrm{DCT-II}, 2^{t-1} \right) - 2\cdot \#a \left(\textrm{DCT-II}, 2^{t-2} \right) 
\nonumber \\
&=&  2^{t+1} -2.
\nonumber
\end{eqnarray}
As derived analogously in the cost of multiplication, we can solve the above equation under the initial conditions $\#a \left(\textrm{DCT-II}, 2 \right) = 2$ and $\#a \left(\textrm{DCT-II}, 4 \right) = 8$ to obtain 
\be
\#a(\textrm{DCT-II}, n) = \frac{4}{3}nt-\frac{8}{9}n-\frac{1}{9}(-1)^t+1.
\label{mcansc2}
\ee
\end{proof}

\begin{corollary}\label{Lco1-41}
Let $n=2^t\:(t \geq 2)$ be given. Using algorithms $\bf{(\ref{algo:c4})}$ and $\bf{(\ref{algo:c2})}$, the arithmetic cost of computing length $n$ DCT-IV algorithm is given by  
\begin{eqnarray}
\#a(\textrm{DCT-IV}, n) &=& \frac{4}{3}nt-\frac{2}{9}n+\frac{2}{9}(-1)^t,
\nonumber \\
\#m(\textrm{DCT-IV}, n) &=& \frac{5}{3}nt+\frac{2}{9}n-\frac{2}{9}(-1)^t.
\label{cc4c4}
\end{eqnarray}
\end{corollary}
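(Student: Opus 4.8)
The plan is to avoid solving a fresh difference equation and instead read the DCT-IV cost directly off the closed-form DCT-II cost already established in Lemma \ref{Lc41}. The key observation is that Algorithm $\bf{(\ref{algo:c4})}$ expresses a length-$n$ DCT-IV in terms of two length-$(n/2)$ DCT-II's together with the sparse factors $R_n$ and $U_n$ (plus permutations, which are not counted). So, unlike the DCT-II cost which obeyed a genuine second-order recurrence, the DCT-IV cost is obtained by a single substitution.

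First I would record the one-step relations, which are in fact already contained in the proof of Lemma \ref{Lc41}. The first line of (\ref{ac24}) gives $\#m(\textrm{DCT-IV}, n) = \#m(U_n) + 2\,\#m(\textrm{DCT-II}, n/2) + \#m(R_n)$, and the analogous identity holds for additions. Feeding in the sparse-factor counts from (\ref{ruc24}), namely $\#a(U_n)=\#m(U_n)=n-2$, $\#a(R_n)=n$ and $\#m(R_n)=2n$, I would obtain the closed relations
\begin{align}
\#a(\textrm{DCT-IV}, n) &= 2\,\#a(\textrm{DCT-II}, n/2) + 2n - 2, \nonumber \\
\#m(\textrm{DCT-IV}, n) &= 2\,\#m(\textrm{DCT-II}, n/2) + 3n - 2. \nonumber
\end{align}

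Next I would substitute the explicit DCT-II formulas (\ref{cc2c4}) evaluated at half the length. Since $n=2^t$ corresponds to $n/2 = 2^{t-1}$, this amounts to replacing $n$ by $n/2$ and $t$ by $t-1$ in (\ref{cc2c4}), the only subtlety being the parity factor $(-1)^{t-1} = -(-1)^t$. After multiplying through by $2$ and adding the $2n-2$ (respectively $3n-2$) overhead, the leading $nt$ term reproduces itself because the factor of $2$ exactly compensates the halving of the argument, the $\pm(-1)^t$ terms flip sign, and the additive constants $+1$ are doubled and then cancelled by the $-2$. Collecting the coefficients of $nt$, $n$, $(-1)^t$ and the constant term then yields precisely (\ref{cc4c4}).

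There is no conceptual obstacle here; the work is purely bookkeeping, and the only place demanding care is tracking the sign flip $(-1)^{t-1}=-(-1)^t$ and verifying that the additive constants telescope to zero. As a sanity check I would evaluate both sides at $t=2$: the relations give $\#m(\textrm{DCT-IV},4)=2\cdot 2 + 12 - 2 = 14$ and $\#a(\textrm{DCT-IV},4)=2\cdot 2 + 8 - 2 = 10$, which agree with (\ref{cc4c4}), confirming the coefficients before writing out the general simplification.
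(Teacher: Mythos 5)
Your proposal is correct and follows essentially the same route as the paper: the paper likewise takes the one-step relations from (\ref{ac24}) together with the sparse-factor counts (\ref{ruc24}), substitutes the closed-form DCT-II costs at length $\frac{n}{2}$ (i.e.\ at $2^{t-1}$, with the same sign flip $(-1)^{t-1}=-(-1)^t$), and simplifies. Your arithmetic and the sanity check at $t=2$ both agree with the paper's result, so there is nothing to correct.
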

\begin{proof}
The number of multiplications required to compute DCT-IV algorithm can be found by substituting (\ref{acansc2}) at $\frac{n}{2}(=2^{t-1})$ into the equation (\ref{ac24})
\begin{eqnarray}
\#m(\textrm{DCT-IV}, n)=n-2 &+& 2 \Bigg( \frac{5}{3}\cdot \frac{n}{2}(t-1)- \frac{10}{9}\cdot \frac{n}{2} 
\nonumber \\
&+ &\frac{1}{9}(-1)^{t-1} + 1 \Bigg)+ 2n 
\nonumber
\end{eqnarray}
Simplifying the above gives the cost of multiplication  
\[
\#m(\textrm{DCT-IV}, n)=\frac{5}{3}nt+\frac{2}{9}n-\frac{2}{9}(-1)^t.
\]
Similarly, the number of additions required to compute DCT-IV algorithm can be found by substituting (\ref{mcansc2}) at $\frac{n}{2}(=2^{t-1})$ to  
\begin{eqnarray}
\#a\:(\textrm{DCT-IV}, n) &=&\#a\left(U_n \right) + 2 \cdot \#a\left(\textrm{DCT-II}, \frac{n}{2} \right) + \#a\left(R_n \right)
\nonumber \\
 &=& 2 \cdot \#a\left(\textrm{DCT-II}, \frac{n}{2} \right) + 2n - 2.
\nonumber
\end{eqnarray}
Simplifying the above yields
\[
\#a(\textrm{DCT-IV}, n) = \frac{4}{3}nt-\frac{2}{9}n+\frac{2}{9}(-1)^t.
\] 
\end{proof}
\noindent The DCT-III algorithm $\bf{(\ref{algo:c3})}$ was stated using the transpose property of matrices so the following corollary is trivial.   
\begin{corollary}\label{Lco2-41}
Let $n=2^t\:(t \geq 2)$ be given. If DCT-III could be computed by using algorithms $\bf{(\ref{algo:c3})}$, $\bf{(\ref{algo:c4})}$, and $\bf{(\ref{algo:c2})}$ then the arithmetic cost of computing a length $n$ DCT-III algorithm is given by  
\begin{eqnarray}
\#a(\textrm{DCT-III}, n)= &\frac{4}{3}nt-\frac{8}{9}n-\frac{1}{9}(-1)^t+1,
\nonumber \\
\#m(\textrm{DCT-III}, n) = & \frac{5}{3}nt-\frac{10}{9}n+\frac{1}{9}(-1)^t + 1.
\label{cc3c3}
\end{eqnarray}
\end{corollary}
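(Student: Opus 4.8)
The plan is to exploit that Algorithm $\bf{(\ref{algo:c3})}$ is the literal transpose of Algorithm $\bf{(\ref{algo:c2})}$, so that the two arithmetic-cost recurrences coincide term by term. First I would read off the one-level matrix factorization implicit in each algorithm. Algorithm $\bf{(\ref{algo:c2})}$ realizes
\[
C_n^{II}=P_n^T\begin{bmatrix} C_{n/2}^{II} & \\ & C_{n/2}^{IV}\end{bmatrix}H_n,
\]
whereas Algorithm $\bf{(\ref{algo:c3})}$ realizes
\[
C_n^{III}=H_n^T\begin{bmatrix} C_{n/2}^{III} & \\ & C_{n/2}^{IV}\end{bmatrix}P_n.
\]
Transposing the first identity and invoking $C^{III}=(C^{II})^T$ together with the symmetry $C^{IV}=(C^{IV})^T$ recovers the second, confirming that the two algorithms perform mirror-image sequences of the same elementary operations.

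Next I would extract the cost recurrence from the recursive structure of Algorithm $\bf{(\ref{algo:c3})}$. The only steps with nonzero arithmetic cost are the butterfly $H_n^T$, the recursive length-$\tfrac{n}{2}$ DCT-III call, and the length-$\tfrac{n}{2}$ DCT-IV call; the permutation $P_n$ is free. Hence, for both additions and multiplications,
\[
\#(\textrm{DCT-III}, n)=\#\!\left(\textrm{DCT-III}, \tfrac{n}{2}\right)+\#\!\left(\textrm{DCT-IV}, \tfrac{n}{2}\right)+\#(H_n^T).
\]
This has exactly the shape of the DCT-II recurrence (\ref{ac24}), the only formal differences being $H_n^T$ in place of $H_n$ and a self-referential DCT-III term in place of the DCT-II term.

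The crux is the observation that transposition leaves the operation count unchanged: since $H_n^T$ has the same sparsity pattern and the same nonzero entries as $H_n$, we have $\#a(H_n^T)=\#a(H_n)=n$ and $\#m(H_n^T)=\#m(H_n)=n$ by (\ref{ruc24}). Combining this with the DCT-IV costs of Corollary \ref{Lco1-41} makes the DCT-III recurrence identical, coefficient by coefficient, to the DCT-II recurrence solved in Lemma \ref{Lc41}. Finally I would verify the base case: at $n=2$ both algorithms apply the same orthogonal $2\times2$ matrix $\tfrac{1}{\sqrt{2}}\left[\begin{array}{rr}1 & 1\\ 1 & -1\end{array}\right]$, so the initial conditions agree as well. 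Since the recurrence and its initial data match those of DCT-II, uniqueness of the solution forces $\#a(\textrm{DCT-III}, n)=\#a(\textrm{DCT-II}, n)$ and $\#m(\textrm{DCT-III}, n)=\#m(\textrm{DCT-II}, n)$, which are precisely the formulas (\ref{cc3c3}). There is no genuine obstacle beyond making the transpose-invariance of the operation count explicit, which is exactly why the corollary is labeled trivial.
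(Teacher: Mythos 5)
Your proposal is correct and follows exactly the route the paper takes: the paper dismisses this corollary as trivial precisely because Algorithm $\bf{(\ref{algo:c3})}$ is the transpose of Algorithm $\bf{(\ref{algo:c2})}$, so its cost recurrence (with the same DCT-IV term from Corollary \ref{Lco1-41}, the same butterfly cost $\#(H_n^T)=\#(H_n)=n$, and the same $2\times 2$ base case) coincides with the one solved in Lemma \ref{Lc41}. Your write-up simply makes explicit what the paper leaves implicit, and it matches the paper's alternative derivation noted in Remark \ref{c3usc24} as well.
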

\begin{remark}
\label{c3usc24}
By using the DCT-III algorithm $\bf{(\ref{algo:c3})}$ and the arithmetic cost of computing the DCT-IV algorithm (in corollary (\ref{Lco1-41})), one can obtain the same results as in corollary (\ref{Lco2-41}). 
\end{remark}
\noindent Let us state the arithmetic cost of computing the DCT-I algorithm $\bf{(\ref{algo:c1})}$. 


\begin{lemma} \label{Lc42}
Let $n=2^t\:(t \geq 2)$ be given. Using algorithms $\bf{(\ref{algo:c1})}$, $\bf{(\ref{algo:c3})}$, $\bf{(\ref{algo:c4})}$ and $\bf{(\ref{algo:c2})}$, the arithmetic cost of a DCT-I algorithm of length $n+1$ is given by  
\begin{eqnarray}
\#a\:(\textrm{DCT-I}, n+1) =& \frac{4}{3}nt-\frac{14}{9}n+\frac{1}{18}(-1)^t +t + \frac{7}{2} 
\nonumber\\
\#m\:(\textrm{DCT-I}, n+1) =& \frac{5}{3}nt-\frac{22}{9}n-\frac{1}{18}(-1)^t+t + \frac{11}{2} 
\label{cc1c3}
\end{eqnarray}
\end{lemma}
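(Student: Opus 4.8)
The plan is to set up a recurrence for the DCT-I cost by reading off the recursive structure of algorithm $\bf{(\ref{algo:c1})}$, and then solve it using the already-established costs for DCT-III. From step 2 of the algorithm, computing $C_{n+1}^{I}\:{\bf x}$ requires applying $\breve{H}_{n+1}$, one recursive call to $\textbf{cos1}$ of length $\frac{n}{2}+1$, and one call to $\textbf{cos3}$ of length $\frac{n}{2}$. Hence I would first write
\[
\begin{aligned}
\#a(\textrm{DCT-I}, n+1) =&\: \#a\left(\textrm{DCT-I}, \tfrac{n}{2}+1\right) + \#a\left(\textrm{DCT-III}, \tfrac{n}{2}\right) + \#a\left(\breve{H}_{n+1}\right),\\
\#m(\textrm{DCT-I}, n+1) =&\: \#m\left(\textrm{DCT-I}, \tfrac{n}{2}+1\right) + \#m\left(\textrm{DCT-III}, \tfrac{n}{2}\right) + \#m\left(\breve{H}_{n+1}\right).
\end{aligned}
\]
The first preparatory step is therefore to count the cost of the orthogonal butterfly $\breve{H}_{n+1}$. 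Inspecting its definition, the middle row is a lone $\sqrt{2}/\sqrt{2}=1$ (or is absorbed as a $\pm1$/permutation and thus uncounted), while the two blocks built from $I_{n/2}$ and $\widetilde{I}_{n/2}$ produce $n/2$ butterfly pairs, each costing one addition after the $1/\sqrt{2}$ scaling; I expect $\#a(\breve{H}_{n+1})=\frac{n}{2}$ and $\#m(\breve{H}_{n+1})=n$ (the scaling touches all $n$ nontrivial entries), and I would verify these counts carefully against the convention that $\pm1$-multiplications and permutations are free.

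Next I would substitute the known DCT-III cost from Corollary~\ref{Lco2-41}, evaluated at $\frac{n}{2}=2^{t-1}$, into the recurrences above. Writing $a_t := \#a(\textrm{DCT-I}, 2^t+1)$ and similarly $m_t$, this converts each recurrence into a \emph{first-order} inhomogeneous linear difference equation of the form $a_t - a_{t-1} = g(t)$, where $g(t)$ is an explicit combination of the terms $\frac{4}{3}\cdot 2^{t-1}(t-1)$, $2^{t-1}$, $(-1)^{t-1}$, constants, and the butterfly contribution $\frac{n}{2}=2^{t-1}$ (and the analogous pieces for multiplications). I would then solve by telescoping/summation rather than the characteristic-equation method used earlier: the homogeneous solution is a constant, and the particular solution is obtained by summing $g$, which produces the $t\cdot2^t$, $2^t$, $(-1)^t$, and linear-in-$t$ pieces visible in the claimed closed forms. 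Finally I would pin down the constants using the base case from step 1 of algorithm $\bf{(\ref{algo:c1})}$ at $n=2$: reading off the two explicit $3\times3$ factor matrices gives the initial values $\#a(\textrm{DCT-I},3)$ and $\#m(\textrm{DCT-I},3)$ (again discarding the $\pm1$ and $\sqrt{2}$-scaling bookkeeping per the stated convention), and these fix $a_2, m_2$.

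The main obstacle I anticipate is bookkeeping rather than conceptual: the appearance of the isolated linear term $+t$ and the unusual half-integer constants $\frac{7}{2}$, $\frac{11}{2}$ and the $\frac{1}{18}(-1)^t$ coefficient in the target formulas signals that the inhomogeneity $g(t)$ carries both a geometric part (summing to $t\cdot 2^t$-type and $2^t$-type terms with a factor-of-two index shift) \emph{and} a genuinely constant part that sums to something linear in $t$. Getting the constant part of $g(t)$ exactly right — in particular tracking the stray ``$+1$'' constants from the DCT-III formula and the exact multiplication count for $\breve{H}_{n+1}$ — is where the half-integer constants come from, so I would double-check the summation of the constant piece and the odd/even parity handling of $(-1)^t$ with special care, and then confirm the final closed forms against a direct hand computation at $t=2$ and $t=3$.
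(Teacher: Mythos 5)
Your plan follows the paper's proof essentially line for line: the same recurrence read off Algorithm (\ref{algo:c1}), the same substitution of the DCT-III cost (Corollary \ref{Lco2-41}) at $\frac{n}{2}=2^{t-1}$, reduction to a first-order linear difference equation in $t$, and initial values taken from the $n=2$ base case; solving by telescoping rather than by the characteristic-equation device is an immaterial difference. However, your proposal contains a genuine counting error that would make the computation fail to reproduce (\ref{cc1c3}): the addition count of the butterfly is $\#a\left(\breve{H}_{n+1}\right)=n$, not $\frac{n}{2}$. The matrix $\breve{H}_{n+1}$ consists of the trivial middle row together with $\frac{n}{2}$ two-point butterflies, and each butterfly $\left(x_j,x_{n-j}\right)\mapsto\frac{1}{\sqrt{2}}\left(x_j+x_{n-j},\:x_j-x_{n-j}\right)$ produces \emph{two} outputs and hence costs \emph{two} additions (one sum, one difference), so the total is $2\cdot\frac{n}{2}=n$. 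Your value $\frac{n}{2}$ shortchanges the inhomogeneity of the difference equation by $2^{t-1}$ at every recursion level, so its solution cannot agree with the stated $-\frac{14}{9}n+t+\frac{7}{2}$. A concrete check at $t=2$: directly from the algorithm, $\#a(\textrm{DCT-I},5)=\#a(\textrm{DCT-I},3)+\#a(\textrm{DCT-III},2)+\#a(\breve{H}_5)=4+2+4=10$, which is what (\ref{cc1c3}) gives, whereas your count yields $4+2+2=8$.

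A second hazard is your aside that the base-case values may be read off ``discarding the $\pm1$ and $\sqrt{2}$-scaling bookkeeping per the stated convention.'' The paper's convention frees only multiplications by $\pm 1$ and permutations; multiplications by $\sqrt{2}$, $\frac{1}{\sqrt{2}}$, and $\frac{1}{2}$ are all counted. With the correct convention the two $3\times 3$ factors at $n=2$ give $\#a(\textrm{DCT-I},3)=4$ and $\#m(\textrm{DCT-I},3)=5$ (two additions per factor, one $\sqrt{2}$-multiplication per factor, and three multiplications by the overall $\frac{1}{2}$), and these are exactly the initial conditions the paper uses; both closed forms in (\ref{cc1c3}) evaluate to these values at $t=1$. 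If you discard the $\sqrt{2}$ scalings you would start from $\#m(\textrm{DCT-I},3)=3$ and the constant $\frac{11}{2}$ in the multiplication formula would not emerge. Your multiplication count $\#m\left(\breve{H}_{n+1}\right)=n$ is correct, as is the rest of the skeleton, so repairing these two counts makes your argument coincide with the paper's.
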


\begin{proof}
Referring to the DCT-I algorithm $\bf{(\ref{algo:c1})}$
\be
\begin{aligned}
\#a\:(\textrm{DCT-I}, n+1) = & \#a\: \left(\textrm{DCT-I}, \frac{n}{2}+1 \right) + \#a\: \left(\textrm{DCT-III}, \frac{n}{2} \right) 
\\ &+ \#a\: \left(\breve{H}_{n+1} \right)
\end{aligned}
\label{ac13}
\ee
The structure of $\breve{H}_{n+1}$ leads to  $\:\: \#a \left(\breve{H}_{n+1} \right) = n.$
This together with the arithmetic cost of computing DCT-III (\ref{cc3c3}) algorithm, we can rewrite (\ref{ac13})
\be
\begin{matrix}
\begin{aligned}
\#a\:(\textrm{DCT-I}, n+1)  =& \#a\: \left(\textrm{DCT-I}, \frac{n}{2}+1 \right) + \frac{2}{3}nt- \frac{1}{9}n
\\&+\frac{1}{9}(-1)^{t}+1\\
\end{aligned}
\end{matrix}
\nonumber
\ee
Since $n=2^t$, the above simplifies to the first order linear difference equation (respect to $t \geq 2$)
\be
\begin{aligned}
\#a\:(\textrm{DCT-I}, 2^t+1)  - & \#a\: \left(\textrm{DCT-I}, 2^{t-1}+1 \right)  
\\ & = \frac{2}{3}t\cdot2^t-\frac{1}{9}2^t +\frac{1}{9}(-1)^t+1
\end{aligned}
\label{eaca1}
\ee
We can obtain the number of additions required to compute the DCT-I algorithm by solving (\ref{eaca1}) under the initial condition $\#a\: \left(\textrm{DCT-I}, 3\right)= 4$. Analogously, one can solve the first order linear difference equation under the initial condition $\#m\: \left(\textrm{DCT-I}, 3\right)= 5$ to obtain the number of multiplications.
\end{proof}
\subsection{Speed improvement factor of DCT I-IV algorithms}
\label{subsec:NuDCT}
Based on the results in lemmas \ref{Lc41}, \ref{Lc42} and corollaries \ref{Lco1-41}, \ref{Lco2-41}, we graph the speed improvement factor of DCT I-IV algorithms having orthogonal factors. It is known that the speed improvement factor plays a critical role in the DFT algorithms as it gives us an idea about the processing speed of the algorithms. We should recall here that this factor increases with the size of matrix. 
\newline\newline
\indent In our case, the speed improvement factor says the ratio between the number of additions and multiplications required to compute the DCT I-IV algorithms, and the direct computation cost of computing these algorithms which is $2n^2-n$ for DCT II-IV, and $2n^2+3n+1$ for DCT-I. Figure \ref{figSIF} shows the speed improvement factor corresponding to the DCT I-IV algorithms with respect to the size of matrix. These numerical data correspond to MATLAB (R2014a version) with machine precision $2.2 \times 10^{-16}$.
\begin{figure}[h]
\center
\includegraphics[width=2.5in,height=2in]{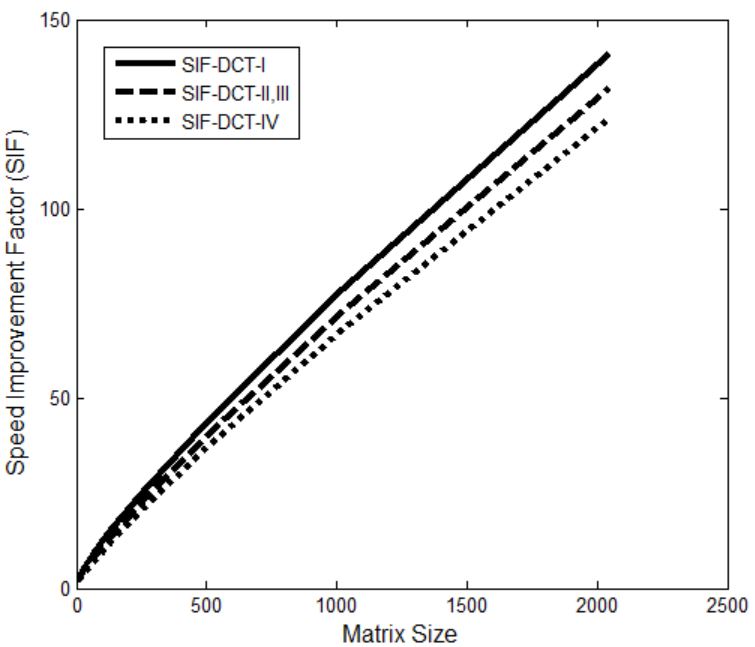}
\caption{Speed improvement factor of DCT I-IV algorithms}
\label{figSIF} 
\end{figure}

\section{Error bounds and stability of DCT algorithms}
\label{Errbdd}
Error bounds and stability of computing the DCT I-IV algorithms are the main concern in this section. Here, to verify the stability, we will use error bounds (using perturbation of the product of matrices stated in \cite{H61}) in computing these algorithms. Let us assume that the computed trigonometry functions ($d_r:={\rm sin} \:\frac{r\pi}{4n}$ or ${\rm cos}\:\frac{r\pi}{4n}$ are the entries of the butterfly matrix) $\widehat{d}_r$ are used and satisfy
\be
\widehat{d}_r = d_r + \epsilon_r, \:\:\: |\epsilon_r| \leq \mu
\label{scerror}
\ee
for all $r=1,3,5,\cdots,n-1$, where $\mu:=\emph{O}(u)$ and $u$ is the unit roundoff.
\\\\
Let's recall the perturbation of the product of matrices stated in \cite{H61} i.e. if $A_k+\Delta A_k \in \mathbb{R}^{n \times n}$ satisfies $| \Delta A_k | \leq \delta_k |A_k|$ for all $k$, then 
\be
\begin{matrix}
\Bigg| \displaystyle\prod_{k=0}^m \left( A_k+\Delta A_k\right) - \displaystyle\prod_{k=0}^m A_k   \Bigg| 
 \leq \Bigg( \displaystyle\prod_{k=0}^m (1+\delta_k) -1 \Bigg) \displaystyle\prod_{k=0}^m \Bigg| A_k \Bigg|
\end{matrix}
\nonumber
\ee
where $|\delta_k| < u$. Moreover, recall $\displaystyle\prod_{k=1}^n (1+\delta_k)^{\pm 1} = 1+\theta_n$ where $|\theta_n| \leq \frac{nu}{1-nu}=:\gamma_n$ and $\gamma_k+u \leq \gamma_{k+1}$,  $\gamma_k+\gamma_j+\gamma_k\gamma_j \leq \gamma_{k+j}$ from \cite{H61}.
\newline\newline
Let us derive error bounds for computing recursive DCT I-IV algorithms with the help of the perturbations in a matrix product.
\bt \label{ThErrC2}
Let $\widehat{{\bf y}}=fl(C_{n}^{II} {\bf x})$, where $n=2^t(t \geq 2)$, be computed using the algorithms $\bf{(\ref{algo:c2})}$, $\bf{(\ref{algo:c4})}$, and assume that (\ref{scerror}) holds, then
\be 
\frac{\left \| {\bf y}-\widehat{{\bf y}} \right \|_2}{\left \| {\bf y} \right \|_2}\leq \frac{\gamma_7(t-1)}{1-\gamma_7(t-1)}.
\label{Ec2}
\ee
\et
\begin{proof}
Using the algorithms $\bf{(\ref{algo:c2})}$, $\bf{(\ref{algo:c4})}$, and the computed matrices $\widehat{\textbf{G}}_{k}$ (in terms of the computed $\widehat{d}_r$) for $k=1,2,\cdots,t-2$:
\small
\be
\begin{aligned}
\widehat{{\bf y}}&=fl\left({\textbf{P}}^T_0\: {\textbf{P}}^T_1{\textbf{F}}_1{\textbf{P}}^T_2{\textbf{F}}_2\cdots{\textbf{P}}^T_{t-2}{\textbf{F}}_{t-2}\:{\textbf{C}}_{t-1}\:\widehat{\textbf{G}}_{t-2}\cdots \widehat{\textbf{G}}_{2}\widehat{\textbf{G}}_{1}{\textbf{G}}_{0}\: {\bf x}\right)\\
&={\textbf{P}}^T_0{\textbf{P}}^T_1\left({\textbf{F}}_1 + \Delta{\textbf{F}_1}\right)\cdots{\textbf{P}}^T_{t-2}\left({\textbf{F}}_{t-2} + \Delta{\textbf{F}_{t-2}}\right) ({\textbf{C}}_{t-1} + \Delta{{\textbf{C}}_{t-1}})
\\& \hspace{.4in}
(\widehat{\textbf{G}}_{t-2}+\Delta{\widehat{\textbf{G}}_{t-2}})\cdots( \widehat{\textbf{G}}_{2}+\Delta{\widehat{\textbf{G}}_{2}})
( \widehat{\textbf{G}}_{1}+\Delta{\widehat{\textbf{G}}_{1}})( {\textbf{G}}_{0}+\Delta{{\textbf{G}}_{0}}) {\bf x}
\end{aligned}
\nonumber
\ee
Each $\textbf{F}_k$ is formed containing a combination of matrices ${I}_{\frac{n}{2^k}}$ and $U_{\frac{n}{2^k}}$. Using the fact that each row in $\textbf{F}_k$ has at most two non-zero entries with mostly ones per row:
\be 
\begin{matrix}
\left | \Delta{{\textbf{F}}_k} \right | \leq {\gamma}_{2}\left | \textbf{F}_k \right |\:\:\:{\rm for}\:\:\:k=1,2,\cdots,t-2
\end{matrix}
\nonumber
\ee
Also each ${\textbf{G}}_k$ is formed containing a combination of matrices ${H}_{\frac{n}{2^k}}$ and $R_{\frac{n}{2^k}}$ except $\textbf{G}_0={H}_{n}$. Using the fact that each row in ${\textbf{G}_k}$ has at most two non-zero entries per row:
\be 
\begin{matrix}
\left | \Delta{{\textbf{G}}_0} \right | \leq {\gamma}_{2}\:\left | {\textbf{G}}_0 \right |,\:\:\: 
\left | \Delta{\widehat{\textbf{G}}_k} \right | \leq {\gamma}_{3}\:\left | \widehat{\textbf{G}}_k \right |,\\
{\rm for}\:\:\:k=1,2\cdots,t-2
\end{matrix}
\nonumber
\ee
${\textbf{C}}_{t-1}$ is a block diagonal matrix containing $C_{2}^{II}$ and $C_{2}^{IV}$ hence
\be 
\left | \Delta{{\textbf{C}}_{t-1}} \right | \leq {\gamma}_{3}\:\left | {\textbf{C}}_{t-1} \right |
\nonumber
\ee
Using direct call of computing trigonometric functions i.e. the view of (\ref{scerror}),
\be
\begin{matrix}
\widehat{\textbf{G}}_k={\textbf{G}}_k+ \Delta{{\textbf{G}}_k}, \hspace{.1in}  |\Delta{{\textbf{G}}_k}| \leq \mu |{\textbf{G}}_k|,
\end{matrix}
\nonumber
\ee
Thus, overall
\small
\be
\begin{aligned}
\widehat{{\bf y}}&={\textbf{P}}^T_0\: {\textbf{P}}^T_1\left({\textbf{F}}_1 + \Delta{\textbf{F}_1}\right)\cdots{\textbf{P}}^T_{t-2}\left({\textbf{F}}_{t-2} + \Delta{\textbf{F}_{t-2}}\right)({\textbf{C}}_{t-1} + \Delta{\textbf{C}_{t-1}})
\\& \hspace{.3in}
({\textbf{G}}_{t-2}+{{\textbf{E}}_{t-2}})\cdots( {\textbf{G}}_{2}+{{\textbf{E}}_{2}})( {\textbf{G}}_{1}+{\textbf{E}}_{1})
( {\textbf{G}}_{0}+\Delta{{\textbf{G}}_{0}}) {\bf x},
\end{aligned}
\nonumber
\ee
\be
\begin{matrix}
|{\textbf{E}}_{k}| \leq (\mu +\gamma_3(1+\mu))|{\textbf{G}}_{k}| \leq \gamma_5 |{\textbf{G}}_{k}|
\end{matrix}
\nonumber
\ee
Hence
\be
\begin{matrix}
\begin{aligned}
\left | {\bf y} -\widehat{{\bf y}} \right | \leq  & \left[ (1+\gamma_2)^{t-1} (1+\gamma_3)(1+\gamma_5)^{t-2} -1\right] 
{\textbf{P}}^T_0\: {\textbf{P}}^T_1 \left| \textbf{F}_1\right|\cdots  
\\ 
& \hspace{.3in}{\textbf{P}}^T_{t-2} \left| \textbf{F}_{t-2}\right|\left| \textbf{C}_{t-1}\right| \left| \textbf{G}_{t-2}\right| \left| \textbf{G}_{t-3}\right| \cdots\left| \textbf{G}_{0}\right| \left|{\bf x}\right|
\end{aligned}
\end{matrix}
\nonumber
\ee
where 
\begin{eqnarray}
(1+\gamma_2)^{t-1}(1+\gamma_3) (1+\gamma_5)^{t-2} -1\leq (1+\gamma_2)^{t-1} (1+\gamma_5)^{t-1} -1
\nonumber \\
\leq (1+\gamma_7)^{t-1} -1 \leq \frac{\gamma_7(t-1)}{1-\gamma_7(t-1)}.
\nonumber
\end{eqnarray}
Since $\textbf{F}_k, \textbf{C}_{t-1}, \textbf{G}_k$ are orthogonal matrices, $\left \| \textbf{F}_k\right\|_2 = \left\|\textbf{C}_{t-1}\right\|_2=\left \| \textbf{G}_k \right\|_2=1$. By orthogonality of $C_{n}^{II}, \left \| {\bf y}\right\|_2 =\left \| {\bf x} \right\|_2$. Hence
\be
\left \| {\bf y}-\widehat{{\bf y}}\right \|_2 \leq \frac{\gamma_7(t-1)}{1-\gamma_7(t-1)} \left \| {\bf y} \right \|_2
\nonumber
\ee
\end{proof}
\begin{corollary}
\label{CErrC2}
${\bf y}=C_{n}^{II}\:{\bf x}$ is forward and backward stable.
\end{corollary}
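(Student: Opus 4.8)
The plan is to obtain both stability claims as immediate consequences of the single relative forward-error bound established in Theorem \ref{ThErrC2}, exploiting the orthogonality of $C_n^{II}$. First I would read off forward stability directly: the theorem gives $\frac{\|{\bf y}-\widehat{{\bf y}}\|_2}{\|{\bf y}\|_2}\leq \frac{\gamma_7(t-1)}{1-\gamma_7(t-1)}$, and since $\gamma_7=\frac{7u}{1-7u}$ with $u$ the unit roundoff and $t=\log_2 n$, the right-hand side is $O(u\log_2 n)$, that is, a modest multiple of $u$. By definition this is forward stability, so this half requires essentially no additional work beyond recording the definition.

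For backward stability I would convert the forward bound into a statement about perturbed data, using that $C_n^{II}$ is orthogonal. Writing the computed vector as $\widehat{{\bf y}}=C_n^{II}({\bf x}+\Delta{\bf x})$ and solving, $\Delta{\bf x}=(C_n^{II})^T(\widehat{{\bf y}}-{\bf y})$; because an orthogonal transformation preserves the Euclidean norm, $\|\Delta{\bf x}\|_2=\|\widehat{{\bf y}}-{\bf y}\|_2$ and $\|{\bf y}\|_2=\|{\bf x}\|_2$. Dividing and invoking the theorem then yields $\frac{\|\Delta{\bf x}\|_2}{\|{\bf x}\|_2}\leq \frac{\gamma_7(t-1)}{1-\gamma_7(t-1)}$, which is exactly backward stability with respect to the input. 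As an alternative route, the proof of Theorem \ref{ThErrC2} already expresses $\widehat{{\bf y}}$ as a product of perturbed orthogonal factors applied to ${\bf x}$, so setting $\widehat{C}$ equal to that product gives $\widehat{{\bf y}}=(C_n^{II}+\Delta C){\bf x}$ with $\|\Delta C\|_2$ of the same order $O(u\log_2 n)$, i.e. backward stability in the matrix sense. I would present the orthogonality argument as the primary route since it is self-contained and reproduces the theorem's constant exactly, whereas the matrix route incurs mild constants from bounding the spectral norms of the absolute-value factors.

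I do not anticipate a genuine obstacle here, as this is a corollary of the preceding theorem. The only points needing care are definitional: I must state precisely what \emph{forward stable} and \emph{backward stable} mean (relative forward error, respectively backward perturbation, of size $O(u)$ up to the benign logarithmic factor $t-1$), and confirm that $\frac{\gamma_7(t-1)}{1-\gamma_7(t-1)}$ indeed qualifies as such a modest multiple of $u$ in the regime $n=2^t$ with $nu\ll 1$. A secondary subtlety is whether to phrase the backward error as a perturbation of the input ${\bf x}$ or of the matrix $C_n^{II}$; for an orthogonal transform these are interchangeable, since the condition number equals one, and I would note this equivalence explicitly so that the corollary reads cleanly.
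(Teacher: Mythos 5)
Your proposal is correct and follows essentially the same route as the paper: forward stability is read directly from the bound in Theorem \ref{ThErrC2}, and backward stability follows by converting the output perturbation $\Delta{\bf y}$ into an input perturbation $\Delta{\bf x}=(C_n^{II})^T\Delta{\bf y}$, with the orthogonality of $C_n^{II}$ giving $\left\| \Delta{\bf x} \right\|_2/\left\| {\bf x} \right\|_2=\left\| \Delta{\bf y} \right\|_2/\left\| {\bf y} \right\|_2$. The paper's proof additionally compares the recursive algorithm's bound against the $\gamma_n$ bound for exact matrix--vector multiplication, but this is commentary rather than a distinct argument, so no substantive difference remains.
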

\begin{proof}
The above theorem says that radix 2 DCT-II yields a tiny forward error provided that ${\rm sin} \:\frac{r\pi}{4n}$ and ${\rm cos}\:\frac{r\pi}{4n}$ are computed stably. It immediately follows that the computation is backward stable because $\widehat{{\bf y}}={\bf y}+\Delta{{\bf y}}=C_{n}^{II}{\bf x} +\Delta{{\bf y}}$ implies $\widehat{{\bf y}}=C_{n}^{II}({\bf x}+\Delta{{\bf x}})$ with $\frac{\left\| \Delta{{\bf x}} \right\|_2}{\left\| {{\bf x}} \right\|_2}=\frac{\left\| \Delta{{\bf y}} \right\|_2}{\left\| {{\bf y}} \right\|_2}$. If we form ${\bf y}=C_{n}^{II}{\bf x}$ by using exact $C_{n}^{II}$, then $\left|{\bf y}- \widehat{{\bf y}} \right| \leq \gamma_{n}\:\left|C_{n}^{II}\right|\:\left|{\bf x}\right|$ so $\left\|{\bf y}- \widehat{{\bf y}} \right\|_2 \leq \gamma_{n}\:\left\|{\bf y}\right\|_2$. As $\mu$ is of order $u$, the $C_{n}^{II}$ has an error bound smaller than that for usual multiplication by the same factor as the reduction in complexity of the method, so DCT-II is perfectly stable. 
\end{proof}
The error bound of computing recursive DCT-IV algorithm can be derived as follows.
\bt \label{ThErrC4}
Let $\widehat{{\bf y}}=fl(C_n^{IV}{\bf x})$, where $n=2^t(t \geq 2)$, be computed using the algorithms $\bf{(\ref{algo:c4})}$, $\bf{(\ref{algo:c2})}$, and assume that (\ref{scerror}) holds, then
\be 
\frac{\left \| {\bf y}-\widehat{{\bf y}} \right \|_2}{\left \| {\bf y} \right \|_2}\leq \frac{\gamma_7t}{1-\gamma_7t}.
\label{Ec4}
\ee
\et
\begin{proof}
Using the algorithms $\bf{(\ref{algo:c4})}$, $\bf{(\ref{algo:c2})}$, and the computed matrices $\widehat{\textbf{W}}_{k}$ (in terms of the computed $\widehat{d}_r$) for $k=0,1,\cdots,t-2$:
\small
\be
\begin{aligned}
\widehat{{\bf y}}&=fl\left({\textbf{P}}^T_0\:{\textbf{U}}_0 {\textbf{P}}^T_1{\textbf{U}}_1\cdots{\textbf{P}}^T_{t-2}{\textbf{U}}_{t-2}\:{\textbf{C}}_{t-1}\:\widehat{\textbf{W}}_{t-2}\cdots \widehat{\textbf{W}}_{1}{\widehat{\textbf{W}}_{0}}\: {\bf x}\right)\\
&={\textbf{P}}^T_0\left({\textbf{U}}_0 + \Delta{\textbf{U}_0}\right)\cdots{\textbf{P}}^T_{t-2}\left({\textbf{U}}_{t-2} + \Delta{\textbf{U}_{t-2}}\right)({\textbf{C}}_{t-1} + \Delta{{\textbf{C}}_{t-1}})
\\& \hspace{.3in}
(\widehat{\textbf{W}}_{t-2}+\Delta{\widehat{\textbf{W}}_{t-2}})\cdots( \widehat{\textbf{W}}_{1}+\Delta{\widehat{\textbf{W}}_{1}})( {\widehat{\textbf{W}}_{0}}+\Delta{\widehat{\textbf{W}}_{0}}) {\bf x}
\end{aligned}
\nonumber
\ee
Each $\textbf{U}_k$ is formed containing a combination of matrices ${I}_{\frac{n}{2^k}}$ and $U_{\frac{n}{2^k}}$ except $\textbf{U}_0=U_n$. Using the fact that each row in $\textbf{U}_k$ has at most two non-zero entries with mostly ones per row:
\be 
\begin{matrix}
\left | \Delta{{\textbf{U}}_k} \right | \leq {\gamma}_{2}\left | \textbf{U}_k \right |\:\:\:{\rm for}\:\:\:k=0,1,\cdots,t-2
\end{matrix}
\nonumber
\ee
Also each ${\textbf{W}}_k$ is formed containing a combination of matrices ${H}_{\frac{n}{2^k}}$ and $R_{\frac{n}{2^k}}$ except $\textbf{W}_0={R}_{n}$. Using the fact that each row in ${\textbf{W}_k}$ has at most two non-zero entries per row:
\be 
\begin{matrix}
\left | \Delta{\widehat{\textbf{W}}_k} \right | \leq {\gamma}_{3}\:\left | \widehat{\textbf{W}}_k \right |,\\
{\rm for}\:\:\:k=0,1\cdots,t-2
\end{matrix}
\nonumber
\ee
${\textbf{C}}_{t-1}$ is a block diagonal matrix containing $C_{2}^{II}$ and $C_{2}^{IV}$ hence
\be 
\left | \Delta{{\textbf{C}}_{t-1}} \right | \leq {\gamma}_{3}\:\left | {\textbf{C}}_{t-1} \right |
\nonumber
\ee
Using direct call of computing trigonometric functions i.e. the view of (\ref{scerror}),
\be
\begin{matrix}
\widehat{\textbf{W}}_k={\textbf{W}}_k+ \Delta{{\textbf{W}}_k}, \hspace{.1in}  |\Delta{{\textbf{W}}_k}| \leq \mu |{\textbf{W}}_k|,
\end{matrix}
\nonumber
\ee
Thus, overall
\small
\be
\begin{aligned}
\widehat{{\bf y}}&={\textbf{P}}^T_0\: \left({\textbf{U}}_0 + \Delta{\textbf{U}_0}\right)\cdots{\textbf{P}}^T_{t-2}\left({\textbf{U}}_{t-2} + \Delta{\textbf{U}_{t-2}}\right)({\textbf{C}}_{t-1} + \Delta{\textbf{C}_{t-1}})
\\& \hspace{.3in}
({\textbf{W}}_{t-2}+{{\textbf{E}}_{t-2}})\cdots({\textbf{W}}_{1}+{\textbf{E}}_{1})
( {\textbf{W}}_{0}+{\textbf{E}}_{0}) {\bf x},
\end{aligned}
\nonumber
\ee
\be
\begin{matrix}
|{\textbf{E}}_{k}| \leq (\mu +\gamma_3(1+\mu))|{\textbf{W}}_{k}| \leq \gamma_5 |{\textbf{W}}_{k}|
\end{matrix}
\nonumber
\ee
Hence
\be
\begin{matrix}
\begin{aligned}
\left | {\bf y} -\widehat{{\bf y}} \right | \leq  & \left[ (1+\gamma_2)^{t-1} (1+\gamma_3)(1+\gamma_5)^{t-1} -1\right] 
{\textbf{P}}^T_0\: \left| \textbf{U}_0\right|\cdots  
\\ 
& \hspace{.3in} {\textbf{P}}^T_{t-2} \left| \textbf{U}_{t-2}\right|\left| \textbf{C}_{t-1}\right| \left| \textbf{W}_{t-2}\right|  \cdots\left| \textbf{W}_{1}\right|\left| \textbf{W}_{0}\right| \left|{\bf x}\right|
\end{aligned}
\end{matrix}
\nonumber
\ee
where 
\begin{eqnarray}
(1+\gamma_2)^{t-1}(1+\gamma_3) (1+\gamma_5)^{t-1} -1 &\leq& (1+\gamma_3) (1+\gamma_7)^{t-1} -1
\nonumber \\
& \leq& (1+\gamma_7)^{t} -1\leq \frac{\gamma_7t}{1-\gamma_7t}.
\nonumber 
\end{eqnarray}
Since $\textbf{U}_k, \textbf{C}_{t-1}, \textbf{W}_k$ are orthogonal matrices, $\left \| \textbf{U}_k\right\|_2 = \left\|\textbf{C}_{t-1}\right\|_2=\left \| \textbf{W}_k \right\|_2=1$. By orthogonality of $C_{n}^{IV}, \left \| {\bf y}\right\|_2 =\left \| {\bf x} \right\|_2$. Hence
\be
\left \| {\bf y}-\widehat{{\bf y}}\right \|_2 \leq \frac{\gamma_7t}{1-\gamma_7t} \left \| {\bf y} \right \|_2
\nonumber
\ee
\end{proof}

\begin{corollary}
\label{StabC4}
${\bf y}=C_{n}^{IV}\:{\bf x}$ is forward and backward stable.
\end{corollary}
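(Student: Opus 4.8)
The plan is to mirror exactly the argument used for Corollary \ref{CErrC2}, since $C_n^{IV}$ enjoys the same orthogonality that drove the DCT-II case. First I would invoke Theorem \ref{ThErrC4}, which already supplies the normwise forward error bound $\|{\bf y}-\widehat{{\bf y}}\|_2/\|{\bf y}\|_2 \leq \gamma_7 t/(1-\gamma_7 t)$. Because $\mu = O(u)$ and this bound grows only linearly in $t = \log_2 n$, it is far smaller than the bound $\gamma_n$ attached to a dense matrix--vector product; this establishes forward stability immediately.

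For backward stability I would exploit the orthogonality of $C_n^{IV}$, noting from Table \ref{tbl:sct} that $[C_n^{IV}]^{-1}=C_n^{IV}$. Writing $\widehat{{\bf y}} = {\bf y} + \Delta{\bf y} = C_n^{IV}{\bf x} + \Delta{\bf y}$ and setting $\Delta{\bf x} := [C_n^{IV}]^{-1}\Delta{\bf y}$, one has $\widehat{{\bf y}} = C_n^{IV}({\bf x}+\Delta{\bf x})$. Orthogonality gives $\|\Delta{\bf x}\|_2 = \|\Delta{\bf y}\|_2$ and $\|{\bf x}\|_2 = \|{\bf y}\|_2$, so the relative backward error $\|\Delta{\bf x}\|_2/\|{\bf x}\|_2$ equals the relative forward error $\|\Delta{\bf y}\|_2/\|{\bf y}\|_2$ and is therefore bounded by the quantity in Theorem \ref{ThErrC4}.

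Finally I would note, exactly as in Corollary \ref{CErrC2}, that forming ${\bf y}$ through an exact $C_n^{IV}$ yields $|{\bf y}-\widehat{{\bf y}}| \leq \gamma_n |C_n^{IV}|\,|{\bf x}|$, hence $\|{\bf y}-\widehat{{\bf y}}\|_2 \leq \gamma_n\|{\bf y}\|_2$; comparing this with the algorithmic bound of Theorem \ref{ThErrC4} shows the error is reduced by essentially the same factor as the reduction in complexity, so $C_n^{IV}$ is perfectly stable. The argument presents no genuine obstacle --- this corollary is a direct transcription of the DCT-II case --- and the only points requiring care are citing Theorem \ref{ThErrC4} (rather than Theorem \ref{ThErrC2}) and invoking the self-inverse property $[C_n^{IV}]^{-1}=C_n^{IV}$ in place of the $C_n^{II}$/$C_n^{III}$ pairing.
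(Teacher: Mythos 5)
Your proposal is correct and follows essentially the same route as the paper's own proof: forward stability from the bound in Theorem \ref{ThErrC4}, backward stability by pulling the forward perturbation $\Delta{\bf y}$ back through the orthogonal (self-inverse) matrix $C_n^{IV}$ so that the relative backward and forward errors coincide, and the concluding comparison with the $\gamma_n$ bound for an exact dense multiplication. Your version is in fact slightly more explicit than the paper's, since you spell out the definition $\Delta{\bf x} := [C_n^{IV}]^{-1}\Delta{\bf y}$ and the norm preservation that the paper leaves implicit.
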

\begin{proof}
The above theorem says that radix 2 DCT-IV yields a tiny forward error provided that ${\rm sin} \:\frac{r\pi}{4n}$ and ${\rm cos}\:\frac{r\pi}{4n}$ are computed stably. It immediately follows that the computation is backward stable because $\widehat{{\bf y}}={\bf y}+\Delta{{\bf y}}=C_{n}^{IV}{\bf x} +\Delta{{\bf y}}$ implies $\widehat{{\bf y}}=C_{n}^{IV}({\bf x}+\Delta{{\bf x}})$ with $\frac{\left\| \Delta{{\bf x}} \right\|_2}{\left\| {{\bf x}} \right\|_2}=\frac{\left\| \Delta{{\bf y}} \right\|_2}{\left\| {{\bf y}} \right\|_2}$. If we form ${\bf y}=C_{n}^{IV}{\bf x}$ by using exact $C_{n}^{IV}$, then $\left|{\bf y}- \widehat{{\bf y}} \right| \leq \gamma_{n}\:\left|C_{n}^{IV}\right|\:\left|{\bf x}\right|$ so $\left\|{\bf y}- \widehat{{\bf y}} \right\|_2 \leq \gamma_{n}\:\left\|{\bf y}\right\|_2$. As $\mu$ is of order $u$, the $C_{n}^{IV}$ has an error bound smaller than that for usual multiplication by the same factor as the reduction in complexity of the method, so DCT-IV is perfectly stable. 
\end{proof}


\begin{corollary}
\label{StabC2}
Let $\widehat{{\bf y}}=fl(C_{n}^{III} {\bf x})$, where $n=2^t(t \geq 2)$, be computed using the algorithms $\bf{(\ref{algo:c3})}$, $\bf{(\ref{algo:c4})}$, $\bf{(\ref{algo:c2})}$, and assume that (\ref{scerror}) holds, then
\be 
\frac{\left \| {\bf y}-\widehat{{\bf y}} \right \|_2}{\left \| {\bf y} \right \|_2}\leq \frac{\gamma_7(t-1)}{1-\gamma_7(t-1)}.
\label{Ec3}
\ee
\end{corollary}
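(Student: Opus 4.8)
The plan is to exploit the transpose relationship between DCT-III and DCT-II, exactly as Corollary \ref{Lco2-41} reduced the arithmetic cost of one to the other. By orthogonality of the cosine matrices in Table \ref{tbl:sct}, $C_n^{III} = (C_n^{II})^{-1} = (C_n^{II})^T$, and Algorithm \ref{algo:c3} is precisely the structural transpose of Algorithm \ref{algo:c2}: DCT-III applies $P_n$ first and $H_n^T$ last where DCT-II applies $H_n$ first and $P_n^T$ last, with the recursive cos3/cos4 calls arranged to mirror the cos2/cos4 calls. I therefore expect the bound for DCT-III to coincide with the DCT-II bound of Theorem \ref{ThErrC2}, namely $\gamma_7(t-1)/(1-\gamma_7(t-1))$, and the entire argument to be a transposition of that proof.

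First I would transpose the fully unrolled factorization employed in the proof of Theorem \ref{ThErrC2}. Writing
\[
C_n^{II} = \textbf{P}_0^T \textbf{P}_1^T \textbf{F}_1 \cdots \textbf{P}_{t-2}^T \textbf{F}_{t-2}\, \textbf{C}_{t-1}\, \textbf{G}_{t-2} \cdots \textbf{G}_1 \textbf{G}_0,
\]
transposition (together with $(C^{II})^T = C^{III}$, $(C^{IV})^T = C^{IV}$ applied blockwise) yields
\[
C_n^{III} = \textbf{G}_0^T \textbf{G}_1^T \cdots \textbf{G}_{t-2}^T\, \textbf{C}_{t-1}^T\, \textbf{F}_{t-2}^T \textbf{P}_{t-2} \cdots \textbf{F}_1^T \textbf{P}_1 \textbf{P}_0,
\]
which is exactly the operation sequence that Algorithm \ref{algo:c3} performs right-to-left on $\textbf{x}$. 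Thus $C_n^{III}$ is a product of the same number of non-trivial factors, each being the transpose of a factor in the DCT-II factorization; since both base-case blocks $C_2^{II}$ and $C_2^{IV}$ are symmetric, $\textbf{C}_{t-1}^T = \textbf{C}_{t-1}$.

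Next I would verify that every transposed factor carries the identical componentwise perturbation bound. The matrices $H_n$, $R_n$, $U_n$ and the block $\textbf{C}_{t-1}$ each have at most two nonzero entries per row \emph{and} per column, so their transposes inherit the same sparsity and the direct-call model (\ref{scerror}) applies verbatim to the transposed butterfly and rotation entries. Consequently $\textbf{G}_0^T = H_n^T$ retains the sharper $\gamma_2$ bound (it has no trigonometric entries), the remaining $\textbf{G}_k^T$ incur the combined bound $|\textbf{E}_k^T| \leq \gamma_5 |\textbf{G}_k^T|$, the factors $\textbf{F}_k^T$ satisfy $|\Delta \textbf{F}_k^T| \leq \gamma_2 |\textbf{F}_k^T|$, and $|\Delta \textbf{C}_{t-1}^T| \leq \gamma_3 |\textbf{C}_{t-1}^T|$, precisely as in Theorem \ref{ThErrC2}.

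With these in hand, the product-perturbation estimate from \cite{H61} produces the same accumulated factor, and the identical simplification
\[
(1+\gamma_2)^{t-1}(1+\gamma_3)(1+\gamma_5)^{t-2}-1 \leq (1+\gamma_7)^{t-1}-1 \leq \frac{\gamma_7(t-1)}{1-\gamma_7(t-1)}
\]
holds; orthogonality of each transposed factor (unit $2$-norm) together with $\|\textbf{y}\|_2 = \|\textbf{x}\|_2$ then delivers (\ref{Ec3}). The only point demanding care is confirming that transposition does not inflate the per-row nonzero counts of the sparse factors. Because each of $H_n$, $R_n$, $U_n$ is a genuine butterfly or rotation with exactly two nonzeros in every row and column, this is immediate, and the whole analysis of Theorem \ref{ThErrC2} transfers without modification.
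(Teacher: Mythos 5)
Your proposal is correct and follows exactly the route the paper intends: the paper states Corollary \ref{StabC2} without proof, treating it as an immediate consequence of the transpose relationship between Algorithm \ref{algo:c3} and the DCT-II factorization analyzed in Theorem \ref{ThErrC2}. Your transposition argument---including the key check that $H_n$, $R_n$, $U_n$, and $\textbf{C}_{t-1}$ have at most two nonzeros per row \emph{and} per column, so the componentwise perturbation bounds $\gamma_2$, $\gamma_3$, $\gamma_5$ survive transposition and the accumulated factor $(1+\gamma_2)^{t-1}(1+\gamma_3)(1+\gamma_5)^{t-2}-1$ is unchanged---supplies precisely the details the paper leaves implicit.
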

\begin{corollary}
\label{StabC3}
${\bf y}=C_{n}^{III}\:{\bf x}$ is forward and backward stable.
\end{corollary}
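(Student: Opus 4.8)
The plan is to mirror the two-step argument already used for DCT-II in Corollary \ref{CErrC2} and for DCT-IV in Corollary \ref{StabC4}, since the genuinely analytic work—the relative forward error bound—has already been carried out in Corollary \ref{StabC2}. First I would simply read off from (\ref{Ec3}) that the computed $\widehat{{\bf y}}=fl(C_n^{III}{\bf x})$ satisfies the tiny relative forward error $\frac{\|{\bf y}-\widehat{{\bf y}}\|_2}{\|{\bf y}\|_2}\le \frac{\gamma_7(t-1)}{1-\gamma_7(t-1)}$, valid whenever the butterfly entries $\sin\frac{r\pi}{4n}$ and $\cos\frac{r\pi}{4n}$ are evaluated stably in the sense of (\ref{scerror}). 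This is precisely the assertion of forward stability, so nothing further is needed for that half.

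The second step converts the forward bound into a backward error statement via orthogonality. Writing $\widehat{{\bf y}}={\bf y}+\Delta{{\bf y}}=C_n^{III}{\bf x}+\Delta{{\bf y}}$ and defining $\Delta{{\bf x}}:=(C_n^{III})^{T}\Delta{{\bf y}}$, I obtain $\widehat{{\bf y}}=C_n^{III}({\bf x}+\Delta{{\bf x}})$ because $(C_n^{III})^{-1}=(C_n^{III})^{T}$. Orthogonality of $C_n^{III}$ then forces $\|\Delta{{\bf x}}\|_2=\|\Delta{{\bf y}}\|_2$ and $\|{\bf x}\|_2=\|{\bf y}\|_2$, so the relative backward error equals the relative forward error, $\frac{\|\Delta{{\bf x}}\|_2}{\|{\bf x}\|_2}=\frac{\|\Delta{{\bf y}}\|_2}{\|{\bf y}\|_2}$, and is therefore controlled by the same small quantity.

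To close, I would compare against dense multiplication by an exact matrix: forming ${\bf y}=C_n^{III}{\bf x}$ directly gives $|{\bf y}-\widehat{{\bf y}}|\le \gamma_n|C_n^{III}||{\bf x}|$, hence $\|{\bf y}-\widehat{{\bf y}}\|_2\le \gamma_n\|{\bf y}\|_2$; since $\mu=O(u)$, the recursive scheme improves on this bound by a factor reflecting its reduction in arithmetic complexity, so DCT-III is perfectly stable. I do not anticipate a real obstacle, as the reasoning is a verbatim analogue of the DCT-II and DCT-IV corollaries; the one point requiring care is that the equality of relative errors rests entirely on orthogonality of $C_n^{III}$ (equivalently $(C_n^{III})^{-1}=C_n^{II}$ from Table \ref{tbl:sct}), so I would be sure to invoke orthogonality, rather than any structural detail of the recursion itself, to deliver the backward bound.
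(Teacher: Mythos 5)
Your proposal is correct and follows essentially the same route as the paper: the paper proves the analogous stability corollaries for DCT-II, DCT-IV, and DCT-I by exactly this two-step argument (read the tiny relative forward error off the corresponding error bound, here (\ref{Ec3}), then convert it to a backward error statement via orthogonality and compare with exact dense multiplication), and the DCT-III case is left to the identical reasoning. Your explicit definition $\Delta{\bf x} := \left(C_{n}^{III}\right)^{T}\Delta{\bf y}$, justified by $\left(C_{n}^{III}\right)^{-1} = \left(C_{n}^{III}\right)^{T} = C_{n}^{II}$, simply makes precise the step the paper asserts implicitly.
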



Finally, the error bound for computing DCT-I algorithm, which runs recursively with DCT II-IV algorithms, can be derived as follows.

\bt \label{ThErrC1}
Let $\widehat{{\bf y}}=fl(C_{n+1}^{I} {\bf x})$, where $n=2^t(t \geq 2)$, be computed using the algorithms $\bf{(\ref{algo:c1})}$, $\bf{(\ref{algo:c3})}$, $\bf{(\ref{algo:c4})}$, $\bf{(\ref{algo:c2})}$, and assume that (\ref{scerror}) holds. Then
\be 
\frac{\left \| {\bf y}-\widehat{{\bf y}} \right \|_2}{\left \| {\bf y} \right \|_2}\leq \frac{\gamma_7t}{1-\gamma_7t}.
\label{Ec1}
\ee
\et
\begin{proof}
Using the algorithms $\bf{(\ref{algo:c1})}$, $\bf{(\ref{algo:c3})}$, $\bf{(\ref{algo:c4})}$, $\bf{(\ref{algo:c2})}$, and the computed matrices $\widehat{\textbf{B}}_{k}$ (in terms of the computed $\widehat{d}_r$) for $k=2,3,\cdots,t-2$:
\small
\be
\begin{aligned}
\widehat{{\bf y}}&=fl\left({\textbf{A}}_0\: {\textbf{A}}_1\cdots{\textbf{A}}_{t-2}\:{\textbf{C}}_{t-1}\:\widehat{\textbf{B}}_{t-2}\cdots \widehat{\textbf{B}}_{2}{\textbf{B}}_{1}{\textbf{B}}_{0}\: {\bf x}\right)\\
&=\left({\textbf{A}}_0 + \Delta{\textbf{A}_0}\right)\cdots\left({\textbf{A}}_{t-2} + \Delta{\textbf{A}_{t-2}}\right)({\textbf{C}}_{t-1} + \Delta{{\textbf{C}}_{t-1}})
\\& \hspace{.3in}
(\widehat{\textbf{B}}_{t-2}+\Delta{\widehat{\textbf{B}}_{t-2}})\cdots( \widehat{\textbf{B}}_{2}+\Delta{\widehat{\textbf{B}}_{2}})
( {\textbf{B}}_{1}+\Delta{{\textbf{B}}_{1}})( {\textbf{B}}_{0}+\Delta{{\textbf{B}}_{0}}) {\bf x}
\end{aligned}
\nonumber
\ee
Each $\textbf{A}_k$ is formed containing a combination of matrices ${P}_{\frac{n}{2^k}+1}^T$, ${P}_{\frac{n}{2^k}}^T$, ${H}_{\frac{n}{2^k}}^T$ and $U_{\frac{n}{2^k}}$ except $\textbf{A}_0={P}_{n+1}^T$ and $\textbf{A}_1={\rm blkdiag}\left({P}_{\frac{n}{2}+1}^T,H^T_{\frac{n}{2}}\right)$. Using the fact that each row in $\textbf{A}_k$ has at most two non-zero entries with mostly ones per row:
\be 
\begin{matrix}
\left | \Delta{{\textbf{A}}_0} \right |=0,
& &
\left | \Delta{{\textbf{A}}_k} \right | \leq {\gamma}_{2}\left | \textbf{A}_k \right |\:\:\:{\rm for}\:\:\:k=1,2,\cdots,t-2
\end{matrix}
\nonumber
\ee
Also each ${\textbf{B}}_k$ is formed containing a combination of matrices $\breve{H}_{\frac{n}{2^k}+1}$, ${H}_{\frac{n}{2^k}}$, ${P}_{\frac{n}{2^k}}$ and $R_{\frac{n}{2^k}}$ except $\textbf{B}_0=\breve{H}_{n+1}$ and $\textbf{B}_1={\rm blkdiag}\left(\breve{H}_{\frac{n}{2}+1}, {P}_{\frac{n}{2}} \right)$. Using the fact that each row in ${\textbf{B}_k}$ has at most two non-zero entries per row:
\be 
\begin{matrix}
\left | \Delta{{\textbf{B}}_0} \right | \leq {\gamma}_{2}\:\left | {\textbf{B}}_0 \right |,\:\:\: 
\left | \Delta{{\textbf{B}}_1} \right | \leq {\gamma}_{2}\:\left | {\textbf{B}}_1 \right |,\:\:\:
\left | \Delta{\widehat{\textbf{B}}_k} \right | \leq {\gamma}_{3}\:\left | \widehat{\textbf{B}}_k \right |,\\
{\rm for}\:\:\:k=2,3,\cdots,t-2
\end{matrix}
\nonumber
\ee
${\textbf{C}}_{t-1}$ is a block diagonal matrix containing $C_{1}^{I}$, $C_{2}^{II}$, $C_{2}^{III}$ and $C_{2}^{IV}$ hence
\be 
\left | \Delta{{\textbf{C}}_{t-1}} \right | \leq {\gamma}_{3}\:\left | {\textbf{C}}_{t-1} \right |
\nonumber
\ee
Using direct call of computing trigonometric functions i.e. the view of (\ref{scerror}),
\be
\begin{matrix}
\widehat{\textbf{B}}_k={\textbf{B}}_k+ \Delta{{\textbf{B}}_k}, \hspace{.1in}  |\Delta{{\textbf{B}}_k}| \leq \mu |{\textbf{B}}_k|,
\end{matrix}
\nonumber
\ee
Thus, overall
\small
\be
\begin{aligned}
\widehat{{\bf y}}&=\left({\textbf{A}}_0 + \Delta{\textbf{A}_0}\right)\cdots\left({\textbf{A}}_{t-2} + \Delta{\textbf{A}_{t-2}}\right)({\textbf{C}}_{t-1} + \Delta{\textbf{C}_{t-1}})
\\& \hspace{.3in}
({\textbf{B}}_{t-2}+{{\textbf{E}}_{t-2}})\cdots( {\textbf{B}}_{2}+{{\textbf{E}}_{2}})( {\textbf{B}}_{1}+\Delta{{\textbf{B}}_{1}})
( {\textbf{B}}_{0}+\Delta{{\textbf{B}}_{0}}) {\bf x},
\end{aligned}
\nonumber
\ee
\be
\begin{matrix}
|{\textbf{E}}_{k}| \leq (\mu +\gamma_3(1+\mu))|{\textbf{B}}_{k}| \leq \gamma_5 |{\textbf{B}}_{k}|
\end{matrix}
\nonumber
\ee
Hence
\be
\begin{matrix}
\begin{aligned}
\left | {\bf y} -\widehat{{\bf y}} \right | \leq  & \left[ (1+\gamma_2)^{t} (1+\gamma_3)(1+\gamma_5)^{t-3} -1\right] 
 \left| \textbf{A}_0\right|\left| \textbf{A}_1\right|\cdots  \left| \textbf{A}_{t-2}\right| 
\\ 
& \left| \textbf{C}_{t-1}\right| \left| \textbf{B}_{t-2}\right| \left| \textbf{B}_{t-3}\right| \cdots\left| \textbf{B}_{0}\right| \left|{\bf x}\right|
\end{aligned}
\end{matrix}
\nonumber
\ee
where 
\begin{eqnarray}
(1+\gamma_2)^{t}(1+\gamma_3) (1+\gamma_5)^{t-3} -1 &\leq& (1+\gamma_2)^{t} (1+\gamma_5)^{t-2} -1
\nonumber \\
&\leq& (1+\gamma_7)^{t} -1
\nonumber \\
&\leq& \frac{\gamma_7t}{1-\gamma_7t}.
\nonumber
\end{eqnarray}
Since $\textbf{A}_k, \textbf{C}_{t-1}, \textbf{B}_k$ are orthogonal matrices, $\left \| \textbf{A}_k\right\|_2 = \left\|\textbf{C}_{t-1}\right\|_2=\left \| \textbf{B}_k \right\|_2=1$. By orthogonality of $C_{n+1}^{I}, \left \| {\bf y}\right\|_2 =\left \| {\bf x} \right\|_2$. Hence
\be
\left \| {\bf y}-\widehat{{\bf y}}\right \|_2 \leq \frac{\gamma_7t}{1-\gamma_7t} \left \| {\bf y} \right \|_2
\nonumber
\ee
\end{proof}

\begin{corollary}
\label{StabC1}
${\bf y}=C_{n+1}^{I}\:{\bf x}$ is forward and backward stable.
\end{corollary}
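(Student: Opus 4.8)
The plan is to follow verbatim the template already used for Corollaries \ref{CErrC2} and \ref{StabC4}, since Theorem \ref{ThErrC1} has done all the analytical work. First I would note that Theorem \ref{ThErrC1} gives forward stability immediately: the relative forward error $\|{\bf y}-\widehat{{\bf y}}\|_2/\|{\bf y}\|_2$ is bounded by $\gamma_7 t/(1-\gamma_7 t)$, which is tiny and grows only like $t=\log_2 n$, provided the entries ${\rm sin}\:\frac{r\pi}{4n}$ and ${\rm cos}\:\frac{r\pi}{4n}$ are computed stably as assumed in (\ref{scerror}).

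Next, to pass from forward to backward stability I would exploit the orthogonality of $C_{n+1}^{I}$ recorded in Table (\ref{tbl:sct}), equivalently the orthogonality of all the factors $\textbf{A}_k$, $\textbf{C}_{t-1}$, $\textbf{B}_k$ identified in the proof of Theorem \ref{ThErrC1}. Writing $\widehat{{\bf y}}={\bf y}+\Delta{{\bf y}}=C_{n+1}^{I}{\bf x}+\Delta{{\bf y}}$ and setting $\Delta{{\bf x}}:=(C_{n+1}^{I})^{-1}\Delta{{\bf y}}$, one obtains $\widehat{{\bf y}}=C_{n+1}^{I}({\bf x}+\Delta{{\bf x}})$. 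Because an orthogonal matrix preserves the Euclidean norm, $\|\Delta{{\bf x}}\|_2=\|\Delta{{\bf y}}\|_2$ and $\|{\bf x}\|_2=\|{\bf y}\|_2$, so the relative backward error equals the relative forward error, $\frac{\|\Delta{{\bf x}}\|_2}{\|{\bf x}\|_2}=\frac{\|\Delta{{\bf y}}\|_2}{\|{\bf y}\|_2}$, which is again bounded by the quantity in (\ref{Ec1}).

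Finally I would compare against direct multiplication to justify the phrase \emph{perfectly stable}: forming ${\bf y}=C_{n+1}^{I}{\bf x}$ with an exactly stored matrix yields the classical componentwise bound $|{\bf y}-\widehat{{\bf y}}|\leq\gamma_{n+1}\:|C_{n+1}^{I}|\:|{\bf x}|$, hence $\|{\bf y}-\widehat{{\bf y}}\|_2\leq\gamma_{n+1}\|{\bf y}\|_2$. Since $\mu$ is of order $u$ and the recursive bound $\gamma_7 t/(1-\gamma_7 t)$ is smaller than $\gamma_{n+1}$ by roughly the same factor as the reduction in arithmetic complexity recorded in Lemma \ref{Lc42}, the recursive DCT-I is as stable as, and cheaper than, direct evaluation.

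I do not anticipate a genuine obstacle, as the corollary is a formal consequence of Theorem \ref{ThErrC1} together with orthogonality, exactly as in the DCT-II and DCT-IV cases. The only point requiring care is the bookkeeping that every factor in the recursion remains orthogonal all the way down to the base cases; in particular that the $n=2$ base block of algorithm (\ref{algo:c1}) is itself a product of orthogonal matrices. This was already secured when the proof of Theorem \ref{ThErrC1} identified $\textbf{C}_{t-1}$ as a block-diagonal of the orthogonal matrices $C_1^{I}$, $C_2^{II}$, $C_2^{III}$, and $C_2^{IV}$, so no further verification is needed.
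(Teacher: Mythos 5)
Your proposal is correct and follows essentially the same route as the paper's own proof: forward stability is read directly off Theorem \ref{ThErrC1}, backward stability follows from the orthogonality of $C_{n+1}^{I}$ by writing $\widehat{{\bf y}}=C_{n+1}^{I}({\bf x}+\Delta{{\bf x}})$ with $\frac{\left\|\Delta{{\bf x}}\right\|_2}{\left\|{\bf x}\right\|_2}=\frac{\left\|\Delta{{\bf y}}\right\|_2}{\left\|{\bf y}\right\|_2}$, and the comparison with the direct-multiplication bound $\gamma_{n+1}$ supports the concluding claim of perfect stability. Your closing remark about the orthogonality of the base-case factors is extra bookkeeping not present in the paper's proof of the corollary, but it is harmless since that fact was already established in the proof of Theorem \ref{ThErrC1}.
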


\begin{proof}
The above theorem says that radix 2 DCT-I yields a tiny forward error provided that ${\rm sin} \:\frac{r\pi}{4n}$ and ${\rm cos}\:\frac{r\pi}{4n}$ are computed stably. It immediately follows that the computation is backward stable because $\widehat{{\bf y}}={\bf y}+\Delta{{\bf y}}=C_{n+1}^{I}{\bf x} +\Delta{{\bf y}}$ implies $\widehat{{\bf y}}=C_{n+1}^{I}({\bf x}+\Delta{{\bf x}})$ with $\frac{\left\| \Delta{{\bf x}} \right\|_2}{\left\| {{\bf x}} \right\|_2}=\frac{\left\| \Delta{{\bf y}} \right\|_2}{\left\| {{\bf y}} \right\|_2}$. If we form ${\bf y}=C_{n+1}^{I}{\bf x}$ by using exact $C_{n+1}^{I}$, then $\left|{\bf y}- \widehat{{\bf y}} \right| \leq \gamma_{n+1}\:\left|C_{n+1}^{I}\right|\:\left|{\bf x}\right|$ so $\left\|{\bf y}- \widehat{{\bf y}} \right\|_2 \leq \gamma_{n+1}\:\left\|{\bf y}\right\|_2$. As $\mu$ is of order $u$, the $C_{n+1}^{I}$ has an error bound smaller than that for usual multiplication by the same factor as the reduction in complexity of the method, so DCT-I is perfectly stable. 
\end{proof}

\section{Image compression results based on DCT algorithms}
\label{sec:IMC}
Discretized images can be considered as matrices. To compress such images one can apply the quantization technique. In this section we use the quantization technique with the help of recursive DCT-II and DCT-IV algorithms to compress the Lena image of size $512 \times 512$ pixels. At first, the image is discretized into $8 \times 8$, $16 \times 16$, and $32 \times 32$ transfer blocks. Next, using the recursive DCT-II and DCT-IV algorithms, 2D-DCTs are computed for each block. The DCT-II and DCT-IV coefficients are then quantized by transforming absence of 93.75$\%$ of the DCT coefficients (93.75$\%$ of DCT-II and DCT-IV coefficients in each transfer block are set to zero). In each block, the inverse 2D DCT-II and DCT-IV coefficients are computed. Finally, putting each block back together into a single image leads to Figures \ref{fig:LenafigC2} and \ref{fig:LenafigC4}.    
\newline\newline
Figure \ref{fig:LenafigC2} shows images with discarded coefficients (except the top left $6.25\%$ in each transfer block) in each transfer block, after applying DCT-II algorithm, and then running recursively with the DCT-IV algorithm.  
\begin{figure}[h]
\centering
\begin{subfigure}{0.21\textwidth}
 \includegraphics[width=\textwidth]{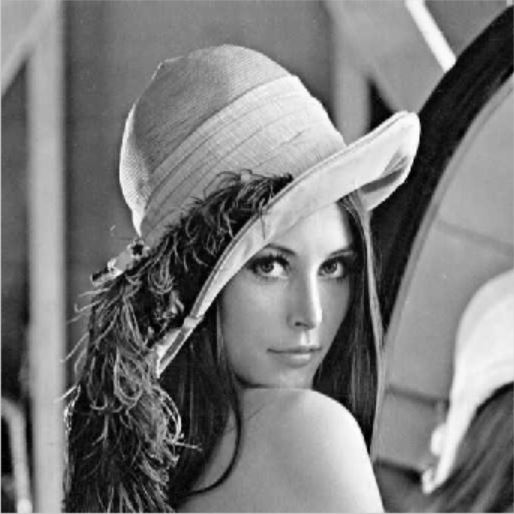}
 \caption{}
 \label{fig:origiL2}
\end{subfigure}
\begin{subfigure}{0.21\textwidth}
\includegraphics[width=\textwidth]{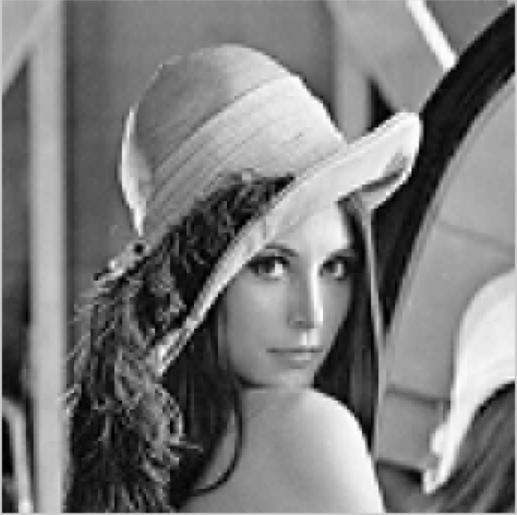}
\caption{}
 \label{fig:new8imgC2}
 \end{subfigure}

 \begin{subfigure}{0.21\textwidth}
  \includegraphics[width=\textwidth]{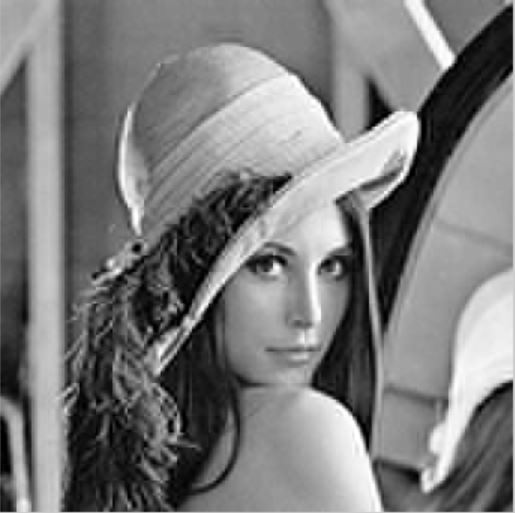}
  \caption{}
 \label{fig:new16imgC2}
 \end{subfigure}
 \begin{subfigure}{0.21\textwidth}
 \includegraphics[width=\textwidth]{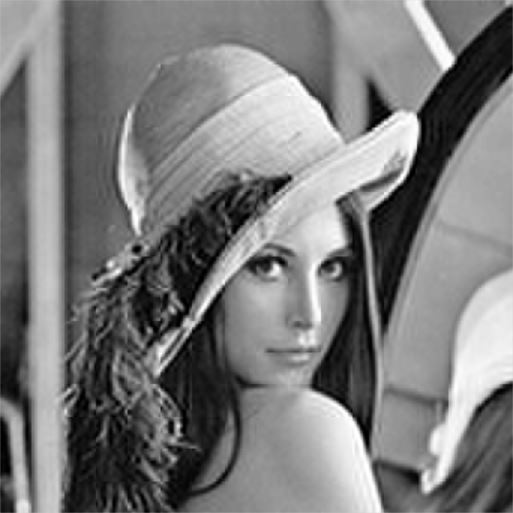}
 \caption{}
 \label{fig:new32imgC2}
 \end{subfigure}
\caption{(\ref{fig:origiL2}) Original Lena Image \:(\ref{fig:new8imgC2}) Reconstructed image with 93.75$\%$ discarded DCT-II coefficients in each $8 \times 8$ transfer block \: (\ref{fig:new16imgC2}) Reconstructed image with 93.75$\%$ discarded DCT-II coefficients in each $16 \times 16$ transfer block \:(\ref{fig:new32imgC2}) Reconstructed image with 93.75$\%$ discarded DCT-II coefficients in each $32 \times 32$ transfer block}
\label{fig:LenafigC2}
\end{figure}   
\\\\
Figure \ref{fig:LenafigC4} shows images with discarded coefficients (except the top left $6.25\%$ in each transfer block) in each transfer block after applying DCT-IV algorithm and then running recursively with the DCT-II algorithm.    
\\
\begin{figure}[h]
\centering
\begin{subfigure}{0.21\textwidth}
\includegraphics[width=\textwidth]{Lena-Ori}
\caption{}
\label{fig:origiL4}
\end{subfigure}
\begin{subfigure}{0.21\textwidth}
\includegraphics[width=\textwidth]{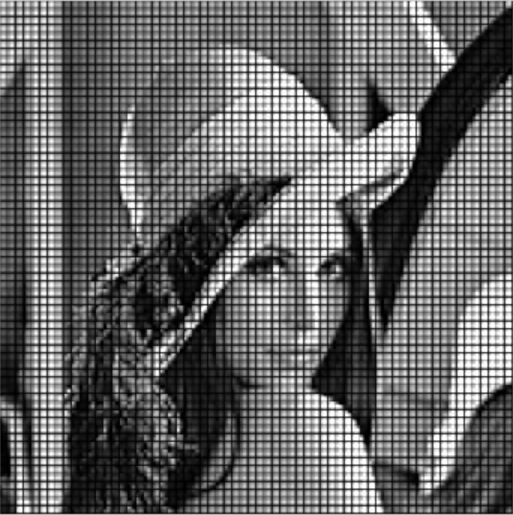}
\caption{}
\label{fig:new8imgC4}
\end{subfigure}

 \begin{subfigure}{0.21\textwidth}
                 \includegraphics[width=\textwidth]{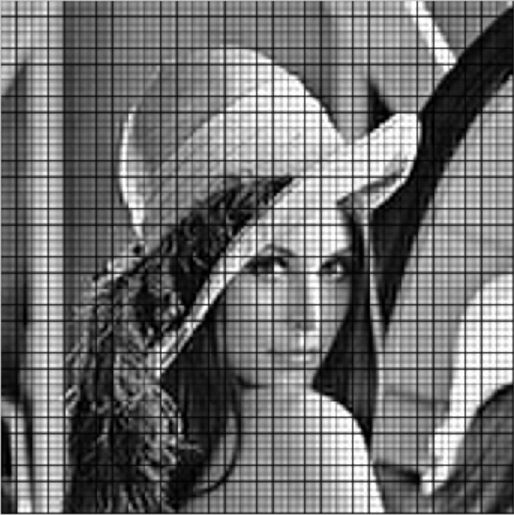}
               \caption{}
                 \label{fig:new16imgC4}
         \end{subfigure}
   \begin{subfigure}{0.21\textwidth}
                 \includegraphics[width=\textwidth]{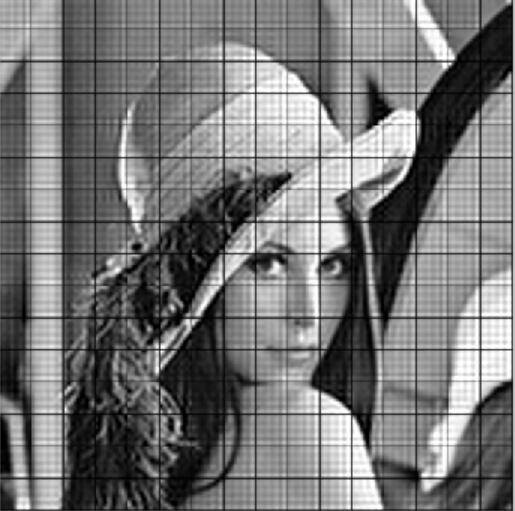}
               \caption{}
                 \label{fig:new32imgC4}
         \end{subfigure}
 \caption{(\ref{fig:origiL4}) Original Lena Image \:(\ref{fig:new8imgC4}) Reconstructed image with 93.75$\%$ discarded DCT-IV coefficients in each $8 \times 8$ transfer block \: (\ref{fig:new16imgC4}) Reconstructed image with 93.75$\%$ discarded DCT-IV coefficients in each $16 \times 16$ transfer block \:(\ref{fig:new32imgC4}) Reconstructed image with 93.75$\%$ discarded DCT-IV coefficients in each $32 \times 32$ transfer block}
\label{fig:LenafigC4}
\end{figure}   
\\
Comparing to Figures \ref{fig:LenafigC2} and \ref{fig:LenafigC4}, the image reconstruction results corresponding to DCT-II algorithm are better than that of the DCT-IV algorithm. Though the quality of reconstructed images in Figures \ref{fig:LenafigC2} and \ref{fig:LenafigC4} are somewhat lost, those images are clearly recognizable even though $93.75\%$ of the DCT-II and DCT-IV coefficients are discarded in each transfer block.

\section{Signal flow graphs for DCT algorithms}
\label{sec:SFG}
Signal flow graphs commonly represent the realization of systems such as electronic devices in electrical engineering, control theory, system engineering, theoretical computer science, etc. Simply put, the objective is to build a device to implement or realize an algorithm, using devices that implement the algebraic operations used in these recursive algorithms. These building blocks are shown next in Figure \ref{figBB}.
\begin{figure}[h]
\center
\includegraphics[width=\linewidth,height=.7in]{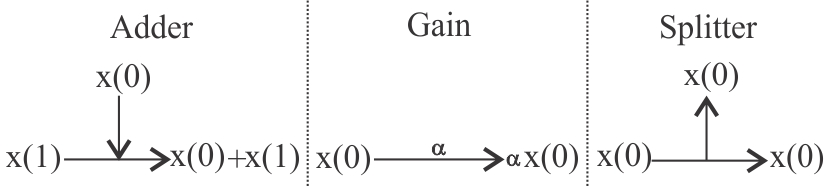}
\caption{Signal flow graphs building blocks}
\label{figBB} 
\end{figure}
This section presents signal flow graphs for 9-point DCT-I and 8-point DCT II-IV algorithms via Figures \ref{figC1}, \ref{figC2}, \ref{figC3}, and \ref{figC4}.  As shown in the flow graphs, in each graph signal flows from the left to the right. These signal flow graphs are corresponding to the decimation-in-frequency algorithms. However one can convert these decimation-in-frequency DCT algorithms into decimation-in-time DCT algorithms. In each Figure (\ref{figC1}, \ref{figC2}, \ref{figC3}, and \ref{figC4}), $\epsilon:=\frac{1}{\sqrt{2}}$, $C_{i,j}:=\cos\frac{i \pi}{2^j}$, and $S_{i,j}=\sin \frac{i\pi}{2^j}$.
\begin{figure}[h]
\center
\includegraphics[width=\linewidth,height=2in]{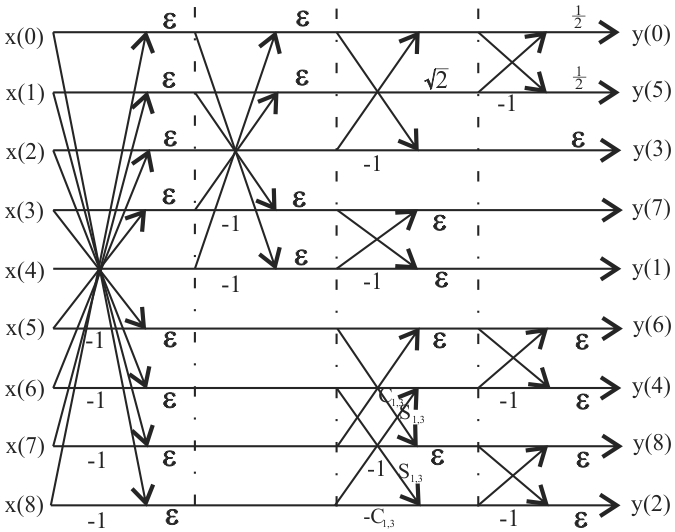}
\caption{Flow graph for 9-point DCT-I algorithm}
\label{figC1} 
\end{figure}
\begin{figure}[h]
\center
\includegraphics[width=\linewidth,height=2in]{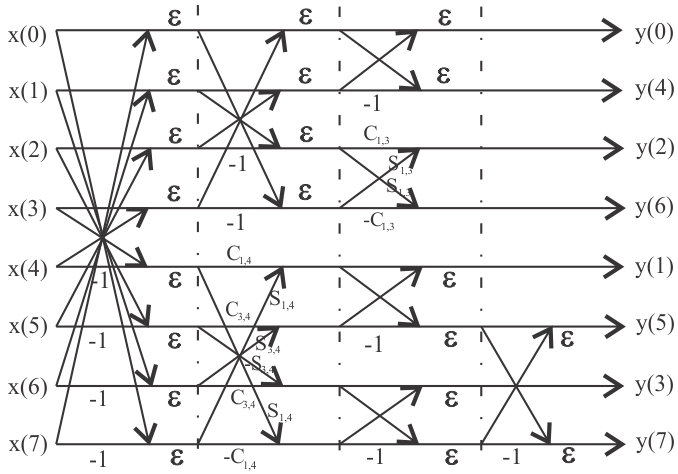}
\caption{Flow graph for 8-point DCT-II algorithm}
\label{figC2} 
\end{figure}
\begin{figure}[h]
\center
\includegraphics[width=\linewidth,height=2in]{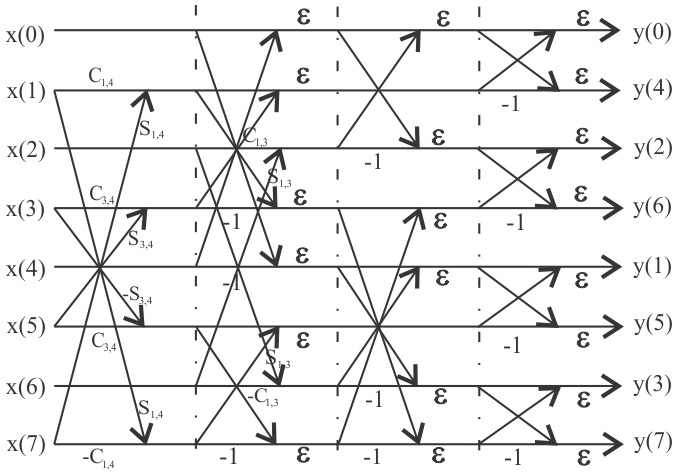}
\caption{Flow graph for 8-point DCT-III algorithm}
\label{figC3} 
\end{figure}
\begin{figure}[h]
\center
\includegraphics[width=\linewidth,height=2in]{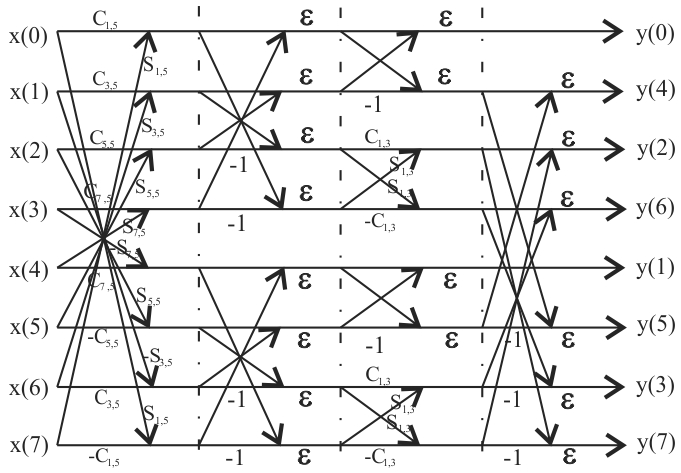}
\caption{Flow graph for 8-point DCT-IV algorithm}
\label{figC4} 
\end{figure}
As shown in the Figures \ref{figC2}, \ref{figC3}, and \ref{figC4}, the input signals are in order: ${\bf x} = \{x(0), x(1), \cdots, x(7)\}$ and output signals are in bit-reversed order: ${\bf y} = \{y(0), y(4), y(2), y(6), y(1), y(5), y(3), y(7)\}$. In bit-reversed order, each output index is represented as a binary number and the indices' bits are reversed. Say for 8-point DCT II, the sequential order of the input indices' bits is $\{000, 001, 010, 011, 100, 101, 110, 111\}$ then reversing these input signal bits yields $\{000, 100, 010, 110, 001, 101, 011,111\}$ which is the output signal.

\section{Conclusion}
This paper provided stable, completely recursive, radix-2 DCT-I and DCT-III algorithms having sparse, orthogonal and rotation/rotation-reflection matrices, defined solely via DCT I-IV algorithms. The arithmetic cost and error bounds of computing DCT I-IV algorithms are addressed. Using the recursive DCT-II and DCT-IV algorithms with the absence of $93.75\%$ coefficients in each transfer block in 2D DCT-II and DCT-IV, one can reconstruct $512 \times 512$ images without seriously affecting the quality. Signal flow graphs are presented for these solely based orthogonal factorization of DCT I-IV in decimation-of-frequency.



\normalsize

\end{document}